\newtheorem{theorem}{Theorem}
\newtheorem{lemma}[theorem]{Lemma}
\theoremstyle{definition}
\newtheorem{example}{Example}[section]
\numberwithin{equation}{section}
\numberwithin{theorem}{section}
\newenvironment{OMabstract}{\noindent\textbf{Abstract.} }{\medskip}
\newenvironment{OMsubjclass}{\noindent\textbf{Mathematics Subject Classification (2020):} }{\medskip}
\newenvironment{OMkeywords}{\noindent\textbf{Keywords:}  }{\medskip}
\def\>{\rangle}
\newcommand{\<}{\langle}
\newcommand{\bP}{{P}}
\newcommand{\bbP}{\mathbb{P}}
\newcommand{\T}{\mathbb{T}}
\newcommand{\C}{\mathbb{C}}
\newcommand{\N}{\mathbb{N}}
\newcommand{\Z}{\mathbb{Z}}
\newcommand{\bA}{{A}}
\newcommand{\bI}{{I}}
\newcommand{\bU}{{U}}
\newcommand{\bS}{{S}}
\newcommand{\bfI}{\mathbf{I}}
\newcommand{\bfA}{\mathbf{A}}
\newcommand{\bF}{{F}}
\newcommand{\bfO}{\mathbf{O}}
\newcommand{\bfP}{\mathbf{P}}
\newcommand{\cB}{\mathcal{B}}
\newcommand{\E}{\mathcal{E}}
\newcommand{\cH}{\mathcal{H}}
\newcommand{\cS}{\mathcal{S}}
\newcommand{\cP}{\mathcal{P}}
\newcommand{\X}{\mathcal{X}}
\newcommand{\pc}{c(\mathbb{Z};\mathbb{C}^d)}
\newcommand{\pco}{c_0(\mathbb{Z};\mathbb{C}^d)}
\newcommand{\elj}{l_1(\mathbb{Z};\mathbb{C}^d)}
\newcommand{\eljm}{l_1(\mathbb{Z};M_d(\C))}
\newcommand{\elpm}{l_p(\mathbb{Z};M_d(\C))}
\newcommand{\eld}{l_2(\mathbb{Z};\mathbb{C}^d)}
\newcommand{\elp}{l_p(\mathbb{Z};\mathbb{C}^d)}
\newcommand{\elpd}{l_p(\mathbb{Z};\mathbb{C}^2)}
\newcommand{\elpj}{l_p(\mathbb{Z};\mathbb{C})}
\newcommand{\elpp}{l_{p'}(\mathbb{Z};\mathbb{C}^d)}
\newcommand{\eli}{l_{\infty}(\mathbb{Z};\mathbb{C}^d)}
\newcommand{\EL}{L_2(\mathbb{T};\mathbb{C}^d)}
\newcommand{\ELj}{L_2(\mathbb{T})}
\newcommand{\ELDM}{L_2(\mathbb{T}; M_d(\C))}
\newcommand{\ELPM}{L_p(\mathbb{T}; M_d(\C))}
\newcommand{\ELJM}{L_1(\mathbb{T}; M_d(\C))}
\newcommand{\ELp}{L_p(\mathbb{T})}
\newcommand{\esssup}{\text{ess\,sup}\,}
\newcommand{\tr}{\text{tr} \,}
\begin{document}

\author{Ewelina Zalot} 
\title{Spectral resolutions for non-self-adjoint block convolution operators}
\date{}
\maketitle


\begin{OMabstract}
	The paper concerns the spectral theory for a class of non-self-adjoint block convolution operators. We mainly discuss the spectral representations of such operators. It is considered the general case of operators defined on Banach spaces. The main results are applied to periodic Jacobi matrices.
\end{OMabstract}

\begin{OMkeywords}
    spectral operators, chains, triangular decomposition, Laurent ope\-rators, Jacobi matrices.
\end{OMkeywords}

\begin{OMsubjclass}
47B40, 47B28, 47B36, 47B35, 47B39.
\end{OMsubjclass}


\section{Introduction}  

In this work we deal with the spectral theory of a certain class of non-self-adjoint convolution operators in Banach spaces. One of the most significant approaches to spectral analysis of non-self-adjoint operators was introduced by N. Dunford, who defined a spectral operator and studied its spectral properties (\cite{dun2}, see also \cite {dun0, dun1}). The generalization of spectral resolutions in the sense of N.~Dunford was the concept of chains of projections, which was transferred from the finite dimensional case to infinitely dimensional spaces. The first triangular representations for a class of non-self-adjoint bounded operators were introduced and developed by M.S. Livshits \cite{Livsic_1}. In turn, M.S. Brodski\v{i}  (\cite{Brodskii} see also \cite{BrodskiiGokKr}) dealt with triangular representations for non-self-adjoint operators with respect to spectral chains. In monographs I.~Gohberg and M. Krein \cite{gohkrein_1} and I.~Gohberg, S.~Goldberg and M.A.~Kaashoek \cite{gohgold} described, among others, resolutions with respect to spectral chains for Volterra operators.

In this paper we give a spectral decomposition of block non-self-adjoint convolution operators, generalizing the classical Schur theory on triangular decomposition of matrices. We also present a construction of invariant chains under the convolution non-self-adjoint operator $\bA$ acting in the Banach space, generated by the block matrix of the form
 \begin{equation*}
 	\left[ \begin{array}{ccccc}
 		\ddots & \vdots & \vdots & \vdots & \\
 		\dots & A_{0} & A_{-1} & A_{-2} &  \dots  \\
 		\dots & A_{1} & A_{0} & A_{-1} & \dots  \\
 		\dots & A_{2} & A_{1} & A_{0} & \dots  \\
 		& \vdots & \vdots & \vdots & \ddots
 	\end{array} \right ],
 \end{equation*}
 \noindent  where $A_j$ $(j = 0, \pm 1, \pm 2, \dots )$ are finite matrices of the same dimensions. \linebreak In this way we shall generalize the results presented in the paper \cite{naiman} containing spectral resolutions of non-self-adjoint tridiagonal periodic Jacobi matrices \linebreak (see also \cite{naiman-1, naiman-2}  or \cite[Chapter 14]{Glaz}) and in the paper \cite{zalot}. A motivation  for our considerations comes from \cite{cojuhari} as well as \cite{cojuhari2}. We also show the applications of our results to Jacobi type matrices, which appear in many problems in mathema\-tics, physics, mathematical physics, etc. As a particular case it can be mentioned the Born - von Karman model of the crystal lattice. It is also worth mentioning that periodic Jacobi matrices can be treated as a finite-difference analog of the one-dimensional Hill's differential operator (\cite{MagnusWinkler}, see also \cite{Glaz}, whose study is based on Floquet's theory). Jacobi-type matrices and their associated continuous fractions were for a long time object of the study for various authors (see \cite{akhiezer}, \cite{Geronimus} and the bibliography given there). 
		
The paper is organized as follows. In Section 2 we recall some key definitions and properties of chains in Banach spaces from \cite{gohgold} and \cite{gohkrein_1}. Our main results are found in Section 3, where we shall give a spectral decomposition of block non-self-adjoint convolution operator with respect to a chain in Banach space. Then we give an alternative method of constructing an invariant chain under the operator $ \bA $ acting in a Hilbert space. In Section 4, certain applications of the main results to  Jacobi type matrices are given. Some concrete examples important by themselves are also presented.

Spectral resolutions with respect to spectral measures and conditions under which the block non-self-adjoint convolution operator is a spectral or scalar operator will be described in the next paper.

\section{Preliminaries. Chains and triangular decompositions}


In this section we outline those aspects and results of general theory of chains in Banach spaces which will be used later on in basic text of our work. \linebreak The material presented is taken mainly from \cite{gohgold} and \cite{gohkrein_1}.

Throughout this section $\X$ is a Banach space. $\cB (\X)$ denotes the algebra of all bounded operators on $\X$ to itself. By a \textit{projection} of $\X$ we mean an idempotent operator $P$ in $\cB (\X)$, i.e., an operator $P \in \cB (\X)$ for which $P^2=P$. In the set $\cP(\X)$ of all projections on $\X$ it can be entered a \textit{partial ordering} by setting  $P_1 \le P_2$ if and only if $P_1P_2 = P_2P_1 = P_1.$ We write $P_1 < P_2$ in case $P_1 \le P_2$ and $P_1 \neq P_2$. For any projection $P \in \cP(\X)$ there holds $O \le P \le I$, where $O$ and $I$ denote the zero and identity projections on $\X$, respectively.

A set $\bbP$ of projections  $P \in \cP(\X)$ is called a \textit{chain} on $\X$ if $O, I \in \bbP$ and $\bbP$ is linearly ordered with respect to the ordering $\le$ inherited from $\cP(\X)$. If $\bbP$ is a~chain on $\X$, then the set $\bbP^c$ of all $P$ such that $I-P \in \bbP$ is also a chain on $\X$. $\bbP^c$ is called \textit{complement chain} of $\bbP$.

A chain $\bbP$ is said to be \textit{maximal} if $\bbP$ is not properly included in any other chain (on $\X$). As follows from Zorn's Lemma, every chain is contained in some maximal chain. A pair $(P_1, P_2)$, where $P_1, P_2 \in \bbP$, is said to be a \textit{jump} (or \textit{discontinuity}) of  $\bbP$ if $P_1 < P_2$ and for any $P \in \bbP$ either $P \le P_1$ or $P \ge P_2$. The rank of the projection $P_2-P_1$ is called the \textit{dimension} of the jump $(P_1, P_2)$. Chains without any jumps are said to be \textit{continuous}. The jumps of a maximal chain can be only one-dimensional.

A chain $\bbP$ on $\X$ is said to be \textit{invariant} under an operator $A$ on $\cB(\X)$ if $PAP = AP$ for each $P \in \bbP$. It is said that $A$ is reduced by $\bbP$, or $\bbP$ is \textit{reducing} for $A$, if both $\bbP$ and $\bbP^c$ are invariant under $A$, in other words, if $AP = PA$ for $P \in \bbP.$

The approaches undertaken in achieving our main goals essentially involve triangular representations similar to those due to Schur, well known for the finite dimensional case. In this context, given a (bounded) operator defined on a Banach space $\X$, we have in mind the possibility of additive lower-upper triangular decomposition
\begin{equation}\label{LUdecomp}
	A = A_- + A_0 + A_+,
\end{equation}
with respect to an appropriate chain $\bbP$. In \eqref{LUdecomp}, $A_0$ is the \textit{diagonal} of the ope\-ra\-tor $A$ (under the assumption that it exists) and is expressed by the following integral
\begin{equation}\label{1.2:repr_diag_calk_lan}
	A_0 = \int_{\bbP} (dP)A(dP)
\end{equation}
along the chain $\bbP$, whereas $A_-$ and $A_+$ are lower and upper block triangular representations with respect to $\bbP$, respectively. Moreover, $A_-$ and $A_+$ are expressed by
\begin{equation}\label{1.2:repr_LU_calk_lan}
	A_- = \int_{\bbP} (dP)AP, \quad A_+ = \int_{\bbP} PA(dP).
\end{equation}
For more information on integration along chains see \cite{BrodskiiGokKr} and \cite{gohkrein_1} (see also \cite{gohgold}).

In particular if the chain $\bbP$ is invariant under the operator $A$, then
\begin{equation*}
	A = A_0 + A_+,
\end{equation*}
and $A = A_0$, i.e.,
\begin{equation*}
		A = \int_{\bbP} (dP)A(dP)
\end{equation*}
in case that $\bbP$ is reducing for $A$.

\section{Spectral resolutions}

In this section, we present the main result of the paper. It concerns spectral decomposition of the considered convolution operators, summing up the extension to them of the classical Schur theory on triangular decomposition of matrices.

\subsection{Block Laurent operators. Matrix symbols}

Throughout this section $M_{d}(\C)$ stands for the set of all $d \times d$ complex matrices ($d \in \mathbb{N})$. The usual norms in all spaces $\C^{d}$ and $M_{d}(\C)$ are denoted by the same symbol $| \cdot |$. By $\elp$ $(1 \le p \le \infty)$ we mean the Banach space of all $\C^d$-valued sequences $u=(u_n)$, $u_n \in \C^d$ ($n \in \Z$), such that \begin{equation*}
	\|u\|_{\elp}:= \left(\sum_{n \in \Z} |u_n|^p \right)^{\frac{1}{p}}<\infty, \quad 1 \le p < \infty,
\end{equation*} 

\noindent and
\begin{equation*}
	\|u\|_{\infty} := \sup_{n \in \Z} |u_n| < \infty, \quad p = \infty.
	\end{equation*} 

\noindent Similarly to the above spaces, we define spaces $\elpm$ $(1 \le p \le \infty)$, \linebreak in which instead of $\C^d$ we consider $M_{d}(\C)$.

$\ELp$ is a Banach space of all measurable functions $u$ such that
\begin{equation*}
\|u\|_p^p := \int_{\T} |u(z)|^p d \tau < \infty, \quad 1 \le p < \infty,
\end{equation*}
and
\begin{equation*}
\|u\|_{\infty} := \esssup_{z \in \T} |u(z)| < \infty, \quad p = \infty,
\end{equation*}
where $\T$ is a unit circle in $\C$
\begin{equation*}
\T := \{z \in \C: |z| = 1\},
\end{equation*}
and $\tau$ is a normalized Lebesgue measure 
\begin{equation*}
d \tau := \frac{1}{2 \pi i} \frac{d z}{z}.
\end{equation*}
$\ELPM$ is a space of all $d \times d$ measurable matrix-valued functions with entries belonging to the space $\ELp$.

Let $\X$ be one of the sequence spaces: $\elp \, (1 \le p < \infty)$, $\pco$, $\pc$, $\eli$, and let $A = (A_n)$ be a sequence belonging to $\eljm$, i.e.,
\begin{equation}\label{2.1:war_zb}
	\|A\| = \sum \limits_{n = - \infty}^{\infty} |A_n| < \infty.
\end{equation}
We define the operator $\bA: \X \to \X$ as follows
\begin{equation}\label{2.1:op_splot}
	\bA u = A*u = \bigg(\sum_{k=-\infty}^{\infty} A_{n-k} u_k \bigg)_{n=-\infty}^{\infty}, \quad u = (u_k) \in \X,
\end{equation}
\noindent where ``$\ast$'' denotes a convolution operation. $\bA$ is called a \textit{convolution operator}, and it is represented by the matrix
\begin{equation}\label{2.1:mac_laurent}
	\left[ \begin{array}{ccccc}
		\ddots & \vdots & \vdots & \vdots & \\
		\dots & A_{0} & A_{-1} & A_{-2} &  \dots  \\
		\dots & A_{1} & A_{0} & A_{-1} & \dots  \\
		\dots & A_{2} & A_{1} & A_{0} & \dots  \\
		& \vdots & \vdots & \vdots & \ddots
	\end{array} \right ],
\end{equation}
which entries are matrices given by $A_{jk} = A_{j-k}$ $(j, k = 0, \pm 1, \pm 2, \dots)$. \linebreak The matrix given by  \eqref{2.1:mac_laurent} is  also called the {\it block Laurent operator}. As is easily seen, the operator $\bA$ can be formally expanded into the series
\begin{equation}\label{2.1:splot_przes}
	\bA  = \sum_{n=-\infty}^{\infty} A_{n} \bS^n,
\end{equation}
where $\bS$ is the (block) shift operator
\begin{equation*}
	\bS u = (u_{n-1}), \quad u = (u_n)\in \X,
\end{equation*}

\noindent The series in \eqref{2.1:splot_przes}, due to \eqref{2.1:war_zb}, is convergent in the operator norm. 

In this way the operator $\bA$ can be regarded as the value of the matrix-valued function
\begin{equation} \label{2.1:def_mac}
	A(z) = \sum \limits_{n = - \infty}^{\infty} A_n z^n, \quad z \in \mathbb{T},
\end{equation}

\noindent  of the operator $\bS$, i.e.,
\begin{equation*} 
	\bA = A(\bS).
\end{equation*}

The matrix-valued function $A(z)$ is called the {\textit{symbol}} of the operator $\bA$. Note that if  the convergent condition \eqref{2.1:war_zb} is fulfilled then $A(z),$ $z \in \T,$ defined by \eqref{2.1:def_mac} is the sum of an absolutely convergent Fourier series, the coefficients of which are expressed by the formulas
\begin{equation}\label{2.1:wspl_Fourier_znorm}
	A_n = \int_\T A(z) z^{-n} d \tau, \quad n \in \Z.
\end{equation}

\noindent Letting $z = e^{i \varphi},$ $0 \le \varphi < 2\pi$, the expansions \eqref{2.1:def_mac} is writing
\begin{equation*}
	A(e^{i \varphi}) = \sum \limits_{n = - \infty}^{\infty} A_n e^{i n \varphi}, \quad 0 \le \varphi < 2\pi,
\end{equation*}

\noindent where
\begin{equation*}\label{2.1:wspl_Fourier}
	A_n = \frac{1}{2\pi} \int_0^{2\pi} A(e^{i \varphi}) e^{-i n \varphi} d \varphi, \quad n \in \Z.
\end{equation*}

Note that each convolution operator $\bA$ satisfying the condition \eqref{2.1:war_zb} corresponds to the $A(z)$ defined by \eqref{2.1:def_mac} with norm \eqref{2.1:war_zb} and vice versa. The paper also includes bounded convolution operators with the symbol that does not satisfy the condition \eqref{2.1:war_zb}. So we define a class of all matrix-valued functions $A(\cdot)$, which are connected with bounded operators. Let $\X$ be one of the spaces $\elp $ $(1 \le p \le \infty)$. 

Let us define the following class of matrix-valued functions   
\begin{equation*}
		\cS^{d }_{p}(\T) := \{A(\cdot) \in \ELJM: \ \bA \in \cB(\X) \}.
\end{equation*} 
We are in a position to prove one of our main results.
\begin{theorem} \label{2.1_tw_przestrzenie}
	The class of matrix-valued functions $\cS^{d}_{p}(\T)$ $(1 \le p \le \infty)$ is~contained in space $\ELDM$.
\end{theorem}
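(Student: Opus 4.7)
The plan is to split into three cases according to $p$ and in each case establish $\sum_{n \in \Z} |A_n|^2 < \infty$; Parseval's identity for $\ELDM$ (using that all matrix norms on $M_d(\C)$ are equivalent) then delivers $A \in \ELDM$.

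\textit{Case 1 $(1 \le p \le 2)$.} For each $v \in \C^d$, I test $\bA$ on the sequence $u^{(v)} \in \elp$ concentrated at index $0$ with value $v$. Since $(\bA u^{(v)})_n = A_n v$ and $\|u^{(v)}\|_{\elp} = |v|$, the embedding $\|\cdot\|_{\eld} \le \|\cdot\|_{\elp}$, valid for $p \le 2$, gives $\sum_n |A_n v|^2 \le \|\bA\|^2 |v|^2$. Running $v$ through a basis of $\C^d$ and summing yields $\sum_n |A_n|^2 \le C\|\bA\|^2$, and Parseval finishes Case~1.

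\textit{Case 2 $(2 < p < \infty)$.} The Banach-space adjoint $\bA^* : \elpp \to \elpp$ is bounded, with $p' < 2$. A short direct computation from $(\bA u)_n = \sum_k A_{n-k} u_k$ identifies $\bA^*$ as the block Laurent operator with blocks $A_{-m}^*$ and symbol $B(z) = A(z^{-1})^* \in \ELJM$. Applying Case~1 to $\bA^*$ yields $B \in \ELDM$, equivalently $A \in \ELDM$.

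\textit{Case 3 $(p = \infty)$.} Duality to $\elj$ is not immediate, so I argue directly. For any finite $F \subset \Z$ and any $u$ supported in $F$ with $|u_k| \le 1$, we have $\|u\|_{\eli} \le 1$, so $|(\bA u)_0| \le \|\bA\|$. For a unit vector $w \in \C^d$, choosing $u_k := A_{-k}^* w / |A_{-k}^* w|$ wherever $A_{-k}^* w \ne 0$ forces $\<(\bA u)_0, w\> = \sum_{k \in F} |A_{-k}^* w|$, which is therefore at most $\|\bA\|$. Letting $F \nearrow \Z$ and running $w$ through the standard basis of $\C^d$, together with the elementary bound $|M| \le C \sum_i |M^* e_i|$, gives $\sum_n |A_n| \le C'\|\bA\|$. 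Hence $A$ lies in the matrix Wiener algebra, so $A \in \ELIM \subseteq \ELDM$.

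The main difficulty is Case~3: duality does not cleanly supply a pre-adjoint on $\elj$, so the bound on $\sum_n|A_n|$ must be extracted by hand via the finite-support optimization and the monotone limit $F \nearrow \Z$. Cases~1 and~2 rely only on the standard $\elp \hookrightarrow \eld$ embedding (respectively its dual) and a routine adjoint computation; the conjugation convention in Case~2 is the only place one has to keep bookkeeping straight.
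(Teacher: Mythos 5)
Your Cases 1 and 2 coincide with the paper's own proof: the paper likewise tests $\bA$ on the sequence $\tilde u = (\delta_{0n}u_0)$ to obtain $(A_n u_0) \in \elp \subset \eld$ for $1 \le p \le 2$, and for $p \ge 2$ passes to the adjoint $\bA^* \in \cB(\elpp)$ and reduces to the first case via $\sum_n \tr(A_nA_n^*) = \sum_n \tr(A_n^*A_n)$. The only genuine divergence is at the endpoint $p=\infty$: the paper folds it into the duality argument, asserting $\bA^* \in \cB(\elpp)$ with $1<p'\le 2$, which silently ignores both that $p'=1$ there and that the Banach adjoint of an operator on $\eli$ acts on $(\eli)^*$ rather than on $\elj$. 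Your Case 3 repairs exactly this point by the finite-support optimization $u_k = A_{-k}^*w/|A_{-k}^*w|$, and in fact proves more than the theorem asks: you get $\sum_n|A_n|<\infty$, i.e. the symbol lies in the matrix Wiener algebra, whence $A(\cdot)\in\ELIM\subset\ELDM$ trivially. So at $p=\infty$ your argument is a strict refinement of (and more rigorous than) the paper's; everywhere else the two proofs are essentially identical.
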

\vskip.3cm
\begin{proof} 
	
	Let $A(\cdot) \in \cS^{d}_{p}(\T)$. Then the convolution operator $\bA$ defined by \eqref{2.1:op_splot}, where $A_n$ are expressed by \eqref{2.1:wspl_Fourier_znorm}, is bounded, i.e., $\bA \in \cB(\elp)$. It follows that for each  $u =~(u_n) \in \elp$ ($1 \le p <\infty$)	
	\begin{equation*}
		\bA u \in \elp.
	\end{equation*} 
	
	\noindent Let	
	\begin{equation*}
		\tilde u = (\delta_{0n}u_0) = (\dots, 0, u_0, 0, \dots).
	\end{equation*}
	
	\noindent Therefore	
	\begin{equation*}
		\bA \tilde u = \bA * \tilde u = \left(\sum_{k=-\infty}^{\infty} A_{n-k}\, \delta_{0k} u_0 \right) = (A_nu_0).
	\end{equation*}
	
	\noindent This implies that	
	\begin{equation*}
		(A_nu_0) \in \elp 
	\end{equation*}
	
	\noindent for each $u_0 \in \C^d$.

	Assume that $1 \le p \le 2$. Then $\elp \subset \eld$, which entails that	
	\begin{equation*}
		(A_nu_0) \in \eld, \quad u_0 \in \C^d. 
	\end{equation*}
	
	\noindent As a consequence,	
	\begin{equation}\label{2.1:zb_u0}
		\sum_{n=-\infty}^{\infty} |A_nu_0|^2 < \infty, \quad u_0 \in \C^d. 
	\end{equation}
	
	\noindent Since $u_0$ was chosen arbitrarily, the estimation \eqref{2.1:zb_u0} holds for canonical vectors $e_j$ ($j = 1, \dots, d$) from the space $\C^d$. Thus we can deduce that	
	\begin{equation*}
		\sum_{n=-\infty}^{\infty} |A_n|^2 = \sum_{n=-\infty}^{\infty} \tr (A_n A_n^*)< \infty,
	\end{equation*}
	
	\noindent so	
	\begin{equation*}
		A(\cdot) \in \ELDM.
	\end{equation*}

	Now let $p \ge 2$. Since $\bA \in \cB(\elp)$, then	
	\begin{equation*}
		\bA^* \in \cB(\elpp), \quad \frac{1}{p} + \frac{1}{p'} = 1,	
	\end{equation*}
	
	\noindent and	
	\begin{equation*}
		\left(A^*_n u_0\right) \in \elpp, \quad u_0 \in \C^d.
	\end{equation*}

	\noindent From the assumption $p \ge 2$ it follows that $1 < p' \le 2$. As a result, we get	
	\begin{equation*}
		\sum_{n=-\infty}^{\infty} |A_n|^2 = \sum_{n=-\infty}^{\infty} \tr (A_n A_n^*) = \sum_{n=-\infty}^{\infty} \tr (A_n^* A_n) = \sum_{n=-\infty}^{\infty} |A^*_n|^2 < \infty.
	\end{equation*}
	
	\noindent Then	
	\begin{equation*}
		A(\cdot) \in \ELDM,
	\end{equation*}
	
	\noindent which finishes the proof.

\end{proof}

\subsection{Schur resolution} \label{sub_schur}

Let $\X$ be one of the following spaces: $\elp$ $(1 \le p \le \infty)$, $\pco$, $\pc$, and consider Laurent operator $\bA: \X \to \X$ defined by \eqref{2.1:op_splot} for which the convergence condition \eqref{2.1:war_zb} is satisfied. Recall that the matrix-valued function $A(z),$ $z \in \T$, defined by \eqref{2.1:def_mac} is called the symbol of the operator $\bA$.
We shall describe a construction of an invariant chain under $\bA$ with respect to which $\bA$ admits a block triangular representation. First we shall obtain a Schur triangular representation for the matrix $A(z)$ in each point $z \in \T$ and then apply the obtained result to the convolution operator $\bA$.

Let  $z \in \T$. By Schur's theorem  (\cite[p. 79]{horn}) the operator $A(z)$ in $\C^d$ is unitarily equivalent to an upper triangular matrix, which we denote by $T(z)$. Namely, there exists a unitary matrix  $U(z)$ such that
\begin{equation} \label{2.4:unit_rown_T}
	A(z) = U(z) T(z) (U(z))^*,
\end{equation}
\noindent where
\begin{equation*} \label{2.4:tacierz}
	T(z) = \left[ \begin{array}{cccc}
		\lambda_1(z) & \mu_{21}(z) & \cdots &  \mu_{d1}(z)   \\
		0 & \lambda_2(z) & \cdots & \mu_{d2}(z)    \\
		\vdots & \vdots &  \ddots & \vdots    \\
		0 & 0 & \cdots &  \lambda_d(z)     \\
	\end{array} \right ]
\end{equation*}
and
\begin{equation*} \label{2.4:ucierz}
	U(z) = \left[ \begin{array}{cccc}
		\varphi^{(1)}(z),& \varphi^{(2)}(z), & \dots, & \varphi^{(d)}(z) \\
	\end{array} \right ] = \left[ \begin{array}{ccc}
		\varphi^{(1)}_1(z)  & \cdots &  \varphi^{(d)}_1(z)   \\
		\varphi^{(1)}_2(z) & \cdots & \varphi^{(d)}_2(z)    \\
		\vdots &   \cdots & \vdots    \\
		\varphi^{(1)}_d(z) &  \cdots &  \varphi^{(d)}_d(z)     \\
	\end{array} \right ].
\end{equation*}
$(U(z))^*$ is the Hermitian adjoint of the matrix $U(z)$, so
\begin{equation*}
	(U(z))^* = (U(z))^{-1}.	
\end{equation*}
The diagonal entries  $\lambda_1(z), \dots, \lambda_d (z)$ of $T(z)$ are the eigenvalues in any prescribed order of the matrix $A(z)$ viewed as an operator in $\C^d$. The orthogonal vectors $\varphi^{(1)}(z), \ldots,$ $\varphi^{(d)}(z) \in \C^d$ form the \textit{Schur orthonormal basis} in $\C^d$, so that
\begin{align*}
	A(z) \varphi^{(1)}(z) & = \lambda_1(z) \varphi^{(1)}(z), \\
	A(z) \varphi^{(k)}(z)  & = \lambda_k(z) \varphi^{(k)}(z) + \sum \limits_{l = 1}^{k - 1} \mu_{kl}(z) \varphi^{(l)}(z), \quad k = 2, \dots, d, \label{2.4:mac_schur}
\end{align*}
 where  $\varphi^{(1)}(z)$ is the eigenvector corresponding to  $\lambda_1(z)$ and $\mu_{kl}(z) \in \C$ \linebreak ($k =2, \ldots, d$, $l = 1, \ldots, d-1$). Note that all the elements $\lambda_l$, $\varphi^{(l)}$ and $\mu_{kl}$ can be selected to be continuous functions of a variable $z \in \T$ (see \cite{ostrowski} and \cite{rellich}).

Denote by
\begin{equation} \label{2.4:skok}
	\Delta P_k (z) := \<\cdot, \varphi^{(k)}(z)\> \varphi^{(k)}(z), \quad k=1, \dots, d,
\end{equation}
\noindent projections on the one-dimensional  subspaces of $\C^d$  generated by the Schur vectors $\varphi^{(k)}(z)$ $(k=1, \dots, d)$, respectively.  $\Delta P_k (z)$ ($k=1, \dots, d$) are orthogonal projections, i.e.,
\begin{equation*}
	\Delta P_k (z) = (\Delta P_k (z))^2 = (\Delta P_k (z))^*, \quad k = 1, \dots, d,
\end{equation*}
and
\begin{equation*}
	\Delta P_k (z)\Delta P_j (z) = 0, \quad k \neq j; \ k, j = 1, \dots, d.
\end{equation*}
Moreover,
\begin{equation*}
	\sum _{k=1}^d \Delta P_l (z) = I,
\end{equation*}
and each of the operators
\begin{equation} \label{2.4:proj}
	P_k(z) = \sum\limits_{l=1}^k \Delta P_l(z), \quad k=1, \dots, d,
\end{equation}

\noindent is an orthogonal projection on  $\C^d$. Projections $P_k(z)$ $(k = 1, \dots, d)$ satisfy the monotonicity condition
\begin{equation}\label{2.4:monot}
	P_k(z)P_n(z) =   \sum\limits_{l=1}^k \sum\limits_{j=1}^n \Delta P_l(z) \Delta P_j(z) = \sum\limits_{l=1}^k \Delta P_l(z) = P_k(z),
\end{equation}

\noindent for $k < n$ $(k, n = 1, \dots, d)$. In view of \eqref{2.4:skok}, \eqref{2.4:proj} and \eqref{2.4:monot} we see that the set
\begin{equation}\label{2.4:lan_skoncz}
	\pi(z): \quad O = P_0(z) \ < \ P_1(z) \ < \ \dots \ < P_k(z) \ < \ \dots \ < \ P_d(z) = I
\end{equation}
\noindent is a finite chain on $\C^d$ of orthogonal projections $P_k(z)$ $(k = 0, \dots, d)$.  $\pi(z)$ is a~maximal chain on $\C^d$, and the pairs $(P_{k-1}(z), P_k(z))$ are one-dimensional jumps of it.

It is easy to show that the chain $\pi(z)$ is invariant under $A(z)$ for each $z$ in~$\T$, i.e., 
\begin{equation}\label{2.4:niezm_mac}
	P_k(z)A(z) P_k(z) = A(z) P_k(z), \quad z \in \T.
\end{equation}

Since the chain $\pi(z)$ is invariant under $A(z)$, then $A(z)$ has the following decomposition
\begin{equation} \label{2.4:rozklad_mac}
	A(z) = A_0(z) + A_+(z),
\end{equation}
\noindent where $A_0(z)$ is the diagonal of $A(z)$ with respect to the chain $\pi(z)$. Since
\begin{align*}
	A_0(z) & = \sum\limits_{l=1}^{d} \Delta P_l(z) A(z) \Delta P_l(z) =  \sum\limits_{l=1}^{d} \Delta P_l(z) \<\cdot, \varphi^{(l)}(z)\> A(z)\varphi^{(l)}(z)\\
	&  = \lambda_l(z) (\Delta P_1(z))^2 + \sum\limits_{l=2}^{d} \Delta P_l(z) \<\cdot, \varphi^{(l)}(z)\> \bigg( \lambda_l(z) \varphi^{(l)}(z) \\
	& \quad + \sum \limits_{j = 1}^{l - 1} \mu_{lj}(z) \varphi^{(j)}(z)\bigg)\\
	& = \sum\limits_{l=1}^{d} \lambda_l(z) \Delta P_l(z) + \sum\limits_{l=2}^{d} \sum \limits_{j = 1}^{l - 1} \<\cdot, \varphi^{(l)}(z)\> \mu_{lj}(z) \<\varphi^{(j)}(z), \varphi^{(l)}(z) \> \varphi^{(l)}(z),
\end{align*}
\noindent it follows that
\begin{equation}\label{2.4:rozklad_mac_A0}
	A_0(z) = \sum\limits_{l=1}^{d} \lambda_l(z)\Delta P_l(z).
\end{equation}

\noindent In turn, the upper triangular matrix $A_+(z)$ is expressed as follows
\begin{align*}\nonumber
	A_+(z) = &  \sum\limits_{l=1}^{d} P_{l-1}(z) A(z) \Delta P_l(z) \\ \nonumber
	= & \sum\limits_{l=2}^{d} \sum\limits_{s=1}^{l-1} \Delta P_s(z) \<\cdot, \varphi^{(l)}(z)\> \bigg( \lambda_l(z) \varphi^{(l)}(z) + \sum \limits_{j = 1}^{l - 1} \mu_{lj}(z) \varphi^{(j)}(z)\bigg)\\ \nonumber
	= &  \sum\limits_{l=2}^{d} \sum\limits_{s=1}^{l-1} \lambda_l(z) \Delta P_s(z)\Delta P_l(z) \\	
	& + \sum\limits_{l=2}^{d} \sum\limits_{s=1}^{l-1} \sum \limits_{j = 1}^{l - 1} \mu_{lj}(z) \<\cdot, \varphi^{(l)}(z)\> \<  \varphi^{(j)}(z), \varphi^{(s)}(z) \> \varphi^{(s)}(z),
\end{align*}
i.e.,
\begin{equation}\label{2.4:rozklad_mac_A+}
	A_+(z) = \sum\limits_{l=2}^{d} \sum\limits_{s=1}^{l-1} \mu_{ls}(z) \<\cdot, \varphi^{(l)}(z)\> \varphi^{(s)}(z).
\end{equation}

\noindent As is seen, $\mu_{ls}(z)$ $(s=1, \dots, l-1; l = 2, \dots, d)$ are elements  of the matrix $A_+(z)$ arranged at the intersection of the $l$-th column and $s$-th row, respectively. $A_+(z)$~is a nilpotent matrix for each $z \in \T$ with an index $l(z)$ $( \le d)$.

Thus we have shown the following auxiliary result.

\begin{lemma} \label{2.2:lem} Let $\bA$ be the block  convolution operator defined by \eqref{2.1:op_splot} for which condition \eqref{2.1:war_zb} is fulfilled. Then its symbol $A(z)$ admits  a triangular decomposition
	\begin{equation*}
	A(z) = A_0(z) + A_+(z), \quad z \in \T,
\end{equation*}	
where $A_0(z)$ is the diagonal of $A(z)$ with respect to the chain $\pi(z)$	defined by \eqref{2.4:lan_skoncz}, and $A_+(z)$ is an upper triangular matrix representing an nilpotent ope\-ra\-tor on $\C^d$.
\end{lemma}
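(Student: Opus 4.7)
The plan is to observe that the statement is essentially an organized restatement of the pointwise construction carried out just before the lemma, so the proof reduces to assembling three ingredients: (i) Schur's theorem applied to $A(z) \in M_d(\C)$ for each fixed $z \in \T$, (ii) the invariance of the resulting finite chain $\pi(z)$ under $A(z)$, and (iii) the general formulas \eqref{1.2:repr_diag_calk_lan} and \eqref{1.2:repr_LU_calk_lan} for the diagonal and upper triangular part relative to an invariant chain.

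First I would fix $z \in \T$ and invoke Schur's theorem on $A(z)$, which supplies the unitary factorization \eqref{2.4:unit_rown_T}. The columns of $U(z)$ are the orthonormal vectors $\varphi^{(1)}(z), \dots, \varphi^{(d)}(z)$ of $\C^d$, and I would use them to define the rank-one orthogonal projections $\Delta P_k(z)$ by \eqref{2.4:skok} together with their cumulative sums $P_k(z)$ given by \eqref{2.4:proj}, thereby producing the finite chain \eqref{2.4:lan_skoncz}.

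Next I would verify the invariance relation $P_k(z) A(z) P_k(z) = A(z) P_k(z)$ for every $k$. This is immediate from the Schur recurrence: $A(z) \varphi^{(k)}(z)$ lies in the linear span of $\varphi^{(1)}(z), \dots, \varphi^{(k)}(z)$, which is exactly the range of $P_k(z)$, so every range of $P_k(z)$ is $A(z)$-invariant. Because $\pi(z)$ is a finite chain whose jumps are the one-dimensional projections $\Delta P_l(z)$, the general integrals \eqref{1.2:repr_diag_calk_lan} and \eqref{1.2:repr_LU_calk_lan} collapse into the finite sums
\begin{equation*}
A_0(z) = \sum_{l=1}^{d} \Delta P_l(z)\, A(z)\, \Delta P_l(z), \qquad A_+(z) = \sum_{l=1}^{d} P_{l-1}(z)\, A(z)\, \Delta P_l(z),
\end{equation*}
and expanding the inner products in the Schur basis produces the explicit expressions \eqref{2.4:rozklad_mac_A0} and \eqref{2.4:rozklad_mac_A+}.

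Finally, I would argue that $A_+(z)$ is nilpotent on $\C^d$: in the orthonormal basis $\{\varphi^{(k)}(z)\}$ the coefficient of $\varphi^{(s)}(z)$ in $A_+(z)\varphi^{(l)}(z)$ vanishes whenever $s \ge l$, so $A_+(z)$ has strictly upper triangular matrix in this basis. Hence $(A_+(z))^d = 0$, with a nilpotency index $l(z) \le d$ as claimed. I do not foresee a genuine obstacle here, since essentially all the computational content has already been carried out in the paragraphs preceding the lemma; the only task is to package the pointwise Schur construction as an additive triangular decomposition of $A(z)$ with respect to the invariant chain $\pi(z)$.
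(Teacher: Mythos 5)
Your proposal is correct and follows essentially the same route as the paper: the paper's proof of this lemma is precisely the pointwise Schur construction carried out in the paragraphs preceding it (the factorization \eqref{2.4:unit_rown_T}, the chain \eqref{2.4:lan_skoncz}, the invariance \eqref{2.4:niezm_mac}, and the finite-sum computations yielding \eqref{2.4:rozklad_mac_A0} and \eqref{2.4:rozklad_mac_A+}), with nilpotency of $A_+(z)$ read off from its strictly upper triangular form in the Schur basis. No gaps.
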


Due to the fact that the eigenvalues and vectors of the orthonormal Schur basis of the symbol $A(z)$ $(z \in\ T)$ were chosen to be continuous functions of~$z$, $T(z)$, $U(z)$, $(U(z))^* = (U(z))^{-1} $ ($z \in T$) represent also continuous matrix-valued functions. The corresponding convolution operators be denoted by  $T$, $\bU$ and $\bU^{-1}$, respectively. In accordance with \eqref{2.4:unit_rown_T}, we have
\begin{equation*}
	\bA = \bU \, T \, \bU^{-1}.
\end{equation*}
Clearly,
\begin{equation*}
	\bU = \sum_{n = -\infty}^{\infty} U_n \, \bS^n, \quad 	\bU^{-1} = \sum_{n = -\infty}^{\infty} (U^{-1})_n \, \bS^n, \quad T = \sum_{n = -\infty}^{\infty} T_n \, \bS^n
\end{equation*}
where
\begin{equation*}
	U_n = \int_\T U(z) z^{-n} d \tau, \quad (U^{-1})_n = \int_\T (U(z))^{-1} z^{-n} d \tau, \quad T_n = \int_\T T(z) z^{-n} d \tau,
\end{equation*}
\noindent $n \in \Z$. Likewise, $\Delta P_k(z)$ represents the corresponding  symbols for the operators 
\begin{equation*}\label{2.2:delta_P_k_splot}
	\Delta \bP_k = \sum_{n = -\infty}^{\infty} (\Delta P_k)_n \, \bS^n, \quad k = 1, \dots, d,
\end{equation*}
where
\begin{equation*}\label{2.2:delta_P_k_wspl}
	(\Delta P_k)_n = \int_\T \Delta P_k (z) z^{-n} d \tau, \quad n \in \Z, \quad k= 1, \dots, d.
\end{equation*}

The correspondence between the convolution operators and their symbols allows us to conclude that $\Delta \bP_k$ $(k = 1, \dots, d)$ are projections on the space $\X$ and, moreover,
\begin{equation*}\label{2.4:splot_ortog}
	\Delta \bP_k \, \Delta \bP_j = 0, \quad k \neq j, \quad k, j = 1, \dots, d,
\end{equation*}
\begin{equation*}\label{2.4:splot_zup}
	\sum_{k=1}^d \Delta \bP_k = \bI.
\end{equation*}
The projections $\Delta \bP_k $ $(k = 1, \dots, d)$ decompose the space $\X$ into a direct sum of subspaces
\begin{equation*}
	\X =  \X_1 \oplus \dots \oplus \X_d,
\end{equation*}
\noindent where
\begin{equation*}
	\X_k := \Delta \bP_k \X, \quad k = 1, \dots, d.
\end{equation*}
Denoting
\begin{equation*}
	\bP_k = \sum_{l=1}^k \Delta \bP_l, \quad k = 1, \dots, d,
\end{equation*}
we obtain the finite chain
\begin{equation*}
	\pi: \quad  O = \bP_0 \ < \ \bP_1 \ < \ \dots \ < \ \bP_k \ < \ \dots \ < \bP_d = \bI
\end{equation*}
on $\X$. The chain $\pi$ is invariant under the operator $\bA$, and the diagonal $\bA_0$ of $\bA$ with respect to $\pi$ is a convolution operator with the symbol $A_0(z)$, $z \in \T$. 
 
Next, we consider the family of curves $\Gamma_k$ obtained, respectively, as images of the functions $\lambda_k = \lambda_k(z)$ ($z \in \T; \, k = 1, \dots, d$), that is
\begin{equation*}
	\Gamma_k = \lambda_k(\T), \quad k = 1, \dots, d,
\end{equation*}
and consider the set
\begin{equation*}
	\Gamma = \bigcup_{k=1}^{d} \Gamma_k.
\end{equation*}
Notice that the closed set $\overline \Gamma$ coincides with the spectrum of the operator $\bA$. 

Having the numbering of the curves $\Gamma_k$ and choosing an appropriate direction of motion on each of them, we transform the set $\Gamma$ into an ordered set of points on the complex plane. Herewith, for any points $\nu$ and $\mu$ in $\Gamma$, relation $\nu \prec \mu$ will mean that $\nu$ belongs to a curve with a lower number than $\mu$, or $\nu$ and $\mu$ lie on the same curve, but $\nu$  precedes $\mu$ in the course of the selected (positive) direction. Representing a point $\nu \in \Gamma_k$ as $\nu = \lambda_k(e^{i t_{\nu}})$, where $0 \le t_{\nu}  < 2 \pi$, we shall take the ordered for the points $\nu$ on $\Gamma_k$ being corresponding to the increment of the argument $t_\nu$ on the interval $\left[0, 2 \pi\right).$

In accordance with above arguments, we shall write $\nu \prec \mu$ if $\nu \in \Gamma_k$ and $\mu \in \Gamma_l$ with $k < l$ or $\nu, \, \mu \in \Gamma_k$ for which $0 \le t_{\nu}  < t_{\mu} < 2 \pi$. We let $\alpha_k $ $\left(= \lambda_k(e^{i0})\right)$ for the beginning point of the curve $\Gamma_k$ and $\beta_k $ $\left(= \lambda_k(e^{2 \pi i})\right).$

Next, we give a construction of a maximal chain on the space $\X$ that is invariant under the block convolution operator $\bA$ given by \eqref{2.1:op_splot}. Let $\nu \in \Gamma$, \linebreak i.e.,  $\nu \in \Gamma_k$ for a $k \in \{1, \dots, d\}$, and consider the matrix-valued function
\begin{equation}\label{2.4:lancuch_P_nu}
	P_{\nu} (z) := P_{k-1}(z) + \chi_{[\alpha_k, \nu)}(\lambda_k(z)) \Delta P_k(z),
\end{equation}
\noindent where $P_{k}(z)$, $\Delta P_{k}(z)$ ($k = 1, \dots, d$) are defined by \eqref{2.4:proj}, \eqref{2.4:skok}, respectively. Note that
\begin{equation*}
	P_{\alpha_1}(z) = O,
\end{equation*}
and
\begin{align*}
P_{\beta_d} (z) & = P_{d-1}(z) + \chi_{[\alpha_d, \beta_d)}(\lambda_d(z)) \Delta P_d(z) \\
& = \sum_{l=1}^{d-1}\Delta P_l(z) + \Delta P_d(z) = \sum_{l=1}^{d}\Delta P_l(z) = I,
\end{align*}
i.e., 
\begin{equation*}
	P_{\beta_d}(z) = I.
\end{equation*}
$P_{\nu} (z)$ are orthogonal projections on $\C^d$ and, as can be easily seen,
\begin{equation*}\label{2.4:niezm_APnu}
	P_{\nu} (z)A(z)P_{\nu} (z) = A(z)P_{\nu} (z).
\end{equation*}

\noindent Moreover, by using \eqref{2.4:rozklad_mac_A0} and \eqref{2.4:lancuch_P_nu}, we get
\begin{equation*}
	A_0(z)P_{\nu} (z) = \sum_{l=1}^{k-1} \lambda_l(z) \Delta P_l(z) + \chi_{[\alpha_k, \nu)}(\lambda_k(z)) \lambda_k(z)\Delta P_k(z), \quad z \in \T,
\end{equation*}
\noindent and moreover
\begin{equation*}
	A_0(z)P_{\nu} (z) = P_{\nu} (z) A_0(z), \quad z \in \T.
\end{equation*}

\noindent As a result, we obtain the following chain
\begin{equation*}
	O = P_{\alpha_1}(z) \ < \ P_{\nu}(z) \ < \  P_{\mu}(z) \ < \ P_{\beta_d}(z) = I, \quad \nu \prec \mu, \ \nu, \, \mu \in \Gamma, \ z \in \T,
\end{equation*}
which, for each $z \in \T$, is invariant under $A(z)$ and is reducing for $A_0(z)$.

Each of the matrix-valued functions $P_{\nu} (z),$ $z \in \T,$
is the symbol of the convolution operator 
\begin{equation*}
	\bP_{\nu}  = \sum_{n \in \Z} P_{\nu, n} \, \bS^n, \quad \nu \in \Gamma,
\end{equation*}
where $P_{\nu, n}$ are the Fourier coefficients
\begin{equation*}
	P_{\nu, n} = \int_{\T} P_{\nu}(z) z^{-n} d \tau, \quad n \in \Z.
\end{equation*}

From the properties of the projections $P_{\nu} (z)$ it immediately follows that that $\bP_{\nu} $ represents projections on the space $\X$, i.e.,
\begin{equation*}
	\bP_{\nu}^2  = \bP_{\nu}, \quad \nu \in \Gamma,
\end{equation*}
and, in addition,
\begin{equation*}
	\bP_{\nu} \, \bP_{\mu} = \bP_{\nu}, \quad \nu \prec \mu, \quad \nu, \mu \in \Gamma,
\end{equation*}
and
\begin{equation*}
	\bP_{\alpha_1} = O, \quad \bP_{\beta_d} =  \bI,
\end{equation*}
as well.

Therefore, we obtain a chain
\begin{equation*} 
	\bbP: \quad  O = \bP_{\alpha_1} \ < \  \bP_{\nu} \ < \ \bP_{\mu} \ < \ \bP_{\beta_d}  = \bI, \quad \nu \prec \mu, \quad   \nu, \mu \in \Gamma,
\end{equation*}
defined on the space $\X$. The chain $\bbP$, thus obtained, is maximal, and invariant under the block convolution operator $\bA$ given by formula \eqref{2.1:op_splot}.

Based on Lemma \ref{2.2:lem} we conclude that the operator $\bA$ admits the decomposition 
\begin{equation}\label{2.2:dekompoz_op_splot}
	\bA = \bA_0 + \bA_+,
\end{equation}
where $\bA_0$ and $\bA_+$ represent convolution operators corresponding to the symbols $A_0(z)$ and $A_+(z)$, respectively. The operators $\bA_0$ and $\bA_+$ are respectively determined by 
\begin{equation*}
	\bA_0  = \sum_{n \in \Z} A_{0, n} \, \bS^n,
\end{equation*}
where
\begin{equation*}
	A_{0, n} = \int_{\T} A_0(z) z^{-n} d \tau, \quad n \in \Z,
\end{equation*}
and
\begin{equation*}
	\bA_+  = \sum_{n \in \Z} A_{+, n} \, \bS^n,
\end{equation*}
where
\begin{equation*}
	A_{+, n} = \int_{\T} A_+(z) z^{-n} d \tau, \quad n \in \Z. \\
\end{equation*}

The chain $\bbP$ is invariant under the operator $A_+$, and is reducing for $A_0$. The representation \eqref{2.2:dekompoz_op_splot} is nothing than the upper triangular decomposition of $A$ with respect to the chain $\bbP$. $A_0$ is the corresponding diagonal operator of $A$ and it can be expressed by the integral (cf. \eqref{1.2:repr_diag_calk_lan})
\begin{equation}\label{2.2:repr_calk_A_0}
	\bA_0 =  \int_{\bbP} (dP) \bA (dP),
\end{equation}
along the chain $\bbP$. Accordingly (cf. \eqref{1.2:repr_LU_calk_lan})
\begin{equation}\label{2.2:repr_calk_A_+}
	\bA_+ =  \int_{\bbP} P \bA (dP).
\end{equation}
The validity of the integral representations \eqref{2.2:repr_calk_A_0} and \eqref{2.2:repr_calk_A_+} are established by applying direct arguments.

Since $A_+(z)$ is a nilpotent matrix with index $l(z)$ for each $z \in \T$, it follows that the operator $\bA_+$ is respectively a nilpotent operator of index
\begin{equation}\label{2.2:index}
l = \max_{z \in \T} l(z).
\end{equation}

\begin{theorem} \label{3:tw_3.3} Let $\bA$ be the block convolution operator on $\X$ given by \eqref{2.1:op_splot} whose elements satisfy \eqref{2.1:war_zb}. Then the operator $\bA$ admits the triangular decomposition \eqref{2.2:dekompoz_op_splot}, i.e.,
	\begin{equation*}
		\bA = \bA_0 + \bA_+,
	\end{equation*}
	where $\bA_0$ is the diagonal of  $\bA$ with respect to the chain $\bbP$, and $A_+$ is a triangular with respect to $\bbP$ nilpotent operator of index \eqref{2.2:index}. The operator $\bA_0$ is expressed by the integral \eqref{2.2:repr_calk_A_0} and $\bA_+$ by \eqref{2.2:repr_calk_A_+}.
\end{theorem}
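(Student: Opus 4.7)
The plan is to lift the pointwise Schur decomposition of the symbol $A(z)$ supplied by Lemma \ref{2.2:lem} to the operator level via the symbol-to-convolution-operator correspondence, and then to read off the diagonal, triangular, and nilpotent properties from their pointwise analogues already established in the construction preceding the theorem.

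First, from Lemma \ref{2.2:lem} we have $A(z) = A_0(z) + A_+(z)$ for every $z \in \T$. Taking the $n$-th Fourier coefficient of both sides via \eqref{2.1:wspl_Fourier_znorm} yields $A_n = A_{0,n} + A_{+,n}$ for each $n \in \Z$. Substitution into the expansion \eqref{2.1:splot_przes} then produces the operator identity $\bA = \bA_0 + \bA_+$ of \eqref{2.2:dekompoz_op_splot}; convergence of the resulting series in operator norm is inherited from \eqref{2.1:war_zb}, since $|A_{0,n}|, |A_{+,n}| \le |A_n|$. The invariance of $\bbP$ under $\bA$ (and its reducing property for $\bA_0$) established during the construction of $\bbP$ shows that this decomposition is indeed the upper triangular decomposition with respect to the chain $\bbP$.

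Second, for the integral representations \eqref{2.2:repr_calk_A_0} and \eqref{2.2:repr_calk_A_+}, I would work with the defining Riemann sums of the chain integrals. For any partition $O = \bP_{\nu_0} < \bP_{\nu_1} < \dots < \bP_{\nu_N} = \bI$ of $\bbP$, the sum
\begin{equation*}
S_N := \sum_{i=1}^{N} (\bP_{\nu_i} - \bP_{\nu_{i-1}}) \bA (\bP_{\nu_i} - \bP_{\nu_{i-1}})
\end{equation*}
is a convolution operator whose symbol is the pointwise expression
\begin{equation*}
\sigma_N(z) = \sum_{i=1}^{N} (P_{\nu_i}(z) - P_{\nu_{i-1}}(z)) A(z) (P_{\nu_i}(z) - P_{\nu_{i-1}}(z)).
\end{equation*}
For every fixed $z \in \T$ the map $\nu \mapsto P_\nu(z)$ takes only the finitely many values $P_0(z), \dots, P_d(z)$ from the chain $\pi(z)$, jumping precisely at $\nu = \lambda_k(z)$, $k=1,\dots,d$. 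Hence for any partition fine enough to separate these jump points, the sum collapses to $\sum_{l=1}^{d} \Delta P_l(z) A(z) \Delta P_l(z) = A_0(z)$. Passing to the limit and invoking the correspondence between symbols and convolution operators yields \eqref{2.2:repr_calk_A_0}. The parallel argument, applied to sums $\sum_i \bP_{\nu_{i-1}} \bA (\bP_{\nu_i} - \bP_{\nu_{i-1}})$ whose symbols stabilize to $\sum_{l=1}^{d} P_{l-1}(z) A(z) \Delta P_l(z) = A_+(z)$, delivers \eqref{2.2:repr_calk_A_+}. Finally, the nilpotency of $\bA_+$ with exact index \eqref{2.2:index} is immediate from the fact that $\bA_+^k$ is the convolution operator with symbol $(A_+(z))^k$: for $k=l$ this vanishes identically on $\T$, while for $k = l-1$ it is nonzero at any $z_0$ realizing the maximum.

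The main technical obstacle I anticipate is making the passage from pointwise symbol convergence of the Riemann sums to operator-norm convergence of the chain integral fully rigorous. This requires combining the continuity of the projection-valued symbols $z \mapsto P_\nu(z)$, inherited from the continuous selections of Schur eigenvalues and eigenvectors \cite{ostrowski, rellich}, with the $\ell_1$-decay of the Fourier coefficients guaranteed by \eqref{2.1:war_zb}, so as to control $\|S_N - \bA_0\|$ uniformly in the partition via the natural estimate of the operator norm by the $\ell_1$-norm of the Fourier coefficients of $\sigma_N(z) - A_0(z)$.
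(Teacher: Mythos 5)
Your overall strategy is the same as the paper's: the theorem there is proved precisely by the construction that precedes it --- Lemma \ref{2.2:lem} for the pointwise Schur splitting of the symbol, the symbol-to-convolution-operator correspondence to lift it to $\bA = \bA_0 + \bA_+$, the collapse of the Riemann sums of the chain integrals to $A_0(z)$ and $A_+(z)$ at the symbol level (the paper's ``direct arguments''), and nilpotency of $\bA_+$ read off from $(A_+(z))^k$. Your Riemann-sum verification is the right way to flesh out \eqref{2.2:repr_calk_A_0} and \eqref{2.2:repr_calk_A_+}; in fact your closing worry about partition-uniform convergence is unnecessary, because the jump points $\lambda_k(z)$ and $\lambda_j(z)$ for $k \neq j$ lie on distinct curves $\Gamma_k$, $\Gamma_j$, so any partition refining $\{\bP_{\alpha_1}, \dots, \bP_{\alpha_d}, \bP_{\beta_d}\}$ already produces the symbol $A_0(z)$ for \emph{every} $z \in \T$ simultaneously, and the net of Riemann sums is eventually constant.

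There is, however, one concretely false step: the inequality $|A_{0,n}|, |A_{+,n}| \le |A_n|$. Fourier coefficients of the diagonal symbol are not dominated by those of $A(\cdot)$; the paper's own Example 4.1 refutes your claim, since there $A_n = 0$ for $n \notin \{0,1\}$ while $A_{0,2} = \tfrac{1}{2}\left[\begin{smallmatrix} 0 & 1 \\ 0 & 0 \end{smallmatrix}\right] \neq 0$. This matters because that inequality is the only justification you give for the operator-norm convergence of $\sum_n A_{0,n}\bS^n$ and $\sum_n A_{+,n}\bS^n$, i.e.\ for the boundedness of $\bA_0$ and $\bA_+$ on $\X = \elp$ with $p \neq 2$. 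What is actually needed is that $A_0(\cdot)$ and $A_+(\cdot)$ again satisfy a condition of type \eqref{2.1:war_zb}, i.e.\ that $(A_{0,n})$ and $(A_{+,n})$ lie in $\eljm$; this does not follow from mere continuity of the Schur data $\lambda_l$, $\varphi^{(l)}$, $\mu_{kl}$ in $z$ and requires a separate argument (a point on which the paper itself is also silent). You should either supply such an argument in place of the false domination, or restrict this part of the claim to the Hilbert-space case $p = 2$, where boundedness of the multiplication operator by the continuous symbols $A_0(\cdot)$, $A_+(\cdot)$ on $\EL$ suffices.
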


Although the additive triangular representation \eqref{2.2:dekompoz_op_splot} seems rather general, useful information regarding spectral properties of the operator $\bA$ can be described directly.

\begin{theorem} The following statements hold true.
	\begin{enumerate}[label={\textup{(\roman*)}}, widest=iii, leftmargin=*]
	\item 	$\sigma(\bA) = \sigma(\bA_0) = \bigcup_{j=1}^d \overline{\lambda_j(\T)}.$	
	
	\item Spectrum of the restriction $\bA \bP_\nu$ of the operator $\bA$ to the subspace $\bP_\nu \X$ consists of the closure of the set of all points $\mu \in \Gamma$ proceeding $\nu$, and the spectrum of the operator $(I-\bP_\nu)\bA$ on $(I-\bP_\nu) \X$ consists of the closure of all $\mu \in \Gamma$ following $\nu$.
\end{enumerate}
	
\end{theorem}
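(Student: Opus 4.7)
The plan is to reduce both statements to a spectral mapping formula for block Laurent (convolution) operators, then apply it once to $\bA$ and $\bA_0$ globally, and once to appropriate restrictions.

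\textbf{Step 1: Spectral formula for block Laurent operators.} I would first record the (classical) fact that for a block convolution operator $\bB$ on $\X$ whose symbol $B(z)$ belongs to the Wiener class (in particular, for any $B(\cdot)$ obtained from a sequence satisfying \eqref{2.1:war_zb}),
\[
\sigma(\bB) \;=\; \bigcup_{z \in \T} \sigma(B(z)).
\]
The key point is that $\bB - \lambda \bI$ is the convolution operator with symbol $B(z) - \lambda I$, and its formal inverse is the convolution operator with symbol $(B(z) - \lambda I)^{-1}$; the latter belongs to the Wiener class (and hence defines a bounded operator on every $\X$ under consideration) exactly when $\det(B(z) - \lambda I) \neq 0$ for all $z \in \T$. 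Continuity of $A(z)$ on the compact set $\T$ makes the union closed.

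\textbf{Step 2: Proof of (i).} Applying Step 1 to $\bA$ gives $\sigma(\bA) = \bigcup_{z \in \T}\sigma(A(z)) = \bigcup_{j=1}^d \lambda_j(\T)$, which is already closed by continuity of $\lambda_j$. Applying it to $\bA_0$, whose symbol is diagonal in the Schur basis with diagonal entries $\lambda_1(z), \ldots, \lambda_d(z)$ by \eqref{2.4:rozklad_mac_A0}, yields the same set. This proves the chain of equalities in (i).

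\textbf{Step 3: Proof of (ii).} Since $\bbP$ is invariant under $\bA$, $\bP_\nu \X$ is $\bA$-invariant and $\bA\bP_\nu$ may be regarded as an operator on $\bP_\nu \X$. For $\nu \in \Gamma_k$, I would decompose
\[
\bP_\nu \X \;=\; \bP_{k-1}\X \;\oplus\; (\bP_\nu - \bP_{k-1}) \X,
\]
which is compatible with the triangular structure (both summands are $\bA$-invariant because the whole chain $\bbP$ is). On $\bP_{k-1}\X$ the operator $\bA$ acts as a block Laurent operator whose symbol is $P_{k-1}(z)A(z)P_{k-1}(z)$; this symbol is continuous, upper triangular in the Schur basis $\varphi^{(1)}(z),\dots,\varphi^{(k-1)}(z)$ with diagonal entries $\lambda_1(z),\dots,\lambda_{k-1}(z)$, so Step 1 gives spectrum $\bigcup_{j=1}^{k-1}\lambda_j(\T)$. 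On $(\bP_\nu-\bP_{k-1})\X$, using \eqref{2.4:lancuch_P_nu}, the restriction of $\bA$ corresponds (in the Schur basis) to multiplication by $\lambda_k(z)$ on the measurable subset $E_\nu := \{z \in \T : \lambda_k(z) \prec \nu\}$; its spectrum is the essential range of $\lambda_k$ on $E_\nu$, i.e. the closure of the arc of $\Gamma_k$ from $\alpha_k$ up to $\nu$. Taking the union of these two contributions yields exactly the closure of $\{\mu \in \Gamma : \mu \prec \nu\}$, as claimed. The statement about $(\bI-\bP_\nu)\bA$ on $(\bI-\bP_\nu)\X$ follows by an entirely symmetric argument applied to the complement chain $\bbP^c$, identifying the contribution of the curves $\Gamma_{k+1},\dots,\Gamma_d$ together with the terminal arc of $\Gamma_k$ from $\nu$ to $\beta_k$.

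\textbf{Expected obstacle.} The delicate point is that the projection $P_\nu(z)$ is \emph{not} continuous in $z$ on $\T$ because of the indicator factor $\chi_{[\alpha_k,\nu)}(\lambda_k(z))$, so $\bP_\nu$ is not a Wiener-class Laurent operator, and the clean spectral formula of Step 1 cannot be invoked directly on $\bP_\nu \X$. This is why I split the subspace into the piece $\bP_{k-1}\X$ (where the symbol is continuous) and the piece $(\bP_\nu - \bP_{k-1})\X$ (where the restriction reduces to a multiplication operator by a measurable bounded scalar function). Justifying this splitting carefully, and identifying the second piece's spectrum with the essential range of $\lambda_k\!\restriction_{E_\nu}$, is the main technical step; once this is done, the description of the spectrum of $\bA\bP_\nu$ in terms of the ordering $\prec$ is immediate from the geometric definition of $\prec$.
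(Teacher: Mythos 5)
For part (i) your argument is exactly the paper's: the paper also observes that $\lambda$ is a regular point of $\bA$ iff $\det(A(z)-\lambda I)\neq 0$ for all $z\in\T$, and then factors $\det(A(z)-\lambda I)=\det(A_0(z)-\lambda I)=\prod_{j}(\lambda_j(z)-\lambda)$. For part (ii) the paper is a one-liner (``similar arguments applied to the convolution operators with symbols $A(z)P_\nu(z)$ and $(I-P_\nu(z))A(z)$''), so you are attempting strictly more than the paper writes down, and you correctly flag the obstacle the paper glosses over: $P_\nu(\cdot)$ contains the factor $\chi_{[\alpha_k,\nu)}(\lambda_k(\cdot))$, is discontinuous, and hence the pointwise spectral formula of your Step~1 cannot be applied verbatim to $\bA\bP_\nu$.

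However, your repair of that obstacle has a genuine gap. The parenthetical claim that both summands of $\bP_\nu\X=\bP_{k-1}\X\oplus(\bP_\nu-\bP_{k-1})\X$ are $\bA$-invariant ``because the whole chain is'' is false: invariance of a chain means each \emph{range} $\bP\X$ is invariant, not the increments $(\bP'-\bP)\X$; those are invariant only when the chain is \emph{reducing}. Concretely, $A(z)\Delta P_k(z)=\lambda_k(z)\Delta P_k(z)+\sum_{l<k}\mu_{kl}(z)\langle\cdot,\varphi^{(k)}(z)\rangle\varphi^{(l)}(z)$, so $\bA$ maps $(\bP_\nu-\bP_{k-1})\X$ into $\bP_k\X$, not into itself, unless all $\mu_{kl}$ vanish. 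Your decomposition is therefore only block \emph{upper triangular}, and for a block upper triangular operator $T=\left[\begin{smallmatrix}A & B\\ 0 & C\end{smallmatrix}\right]$ one has only $\sigma(T)\subseteq\sigma(A)\cup\sigma(C)$ together with $\sigma(A)\cup\sigma(C)\subseteq\sigma(T)\cup(\sigma(A)\cap\sigma(C))$; the union of the diagonal spectra can be strictly larger than $\sigma(T)$ (the unilateral shift as the restriction of the bilateral shift to an invariant subspace is the standard example). So your argument delivers the upper inclusion $\sigma(\bA\bP_\nu)\subseteq\overline{\{\mu\prec\nu\}}$, but the reverse inclusion --- which is the content of the separation property --- is not established by summing the two diagonal contributions. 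To close it you need either to work on $\eld$ via the Fourier transform, where $\bA\bP_\nu$ becomes a decomposable multiplication operator and the spectrum is the essential union of $\sigma\bigl(A(z)|_{\operatorname{ran}P_\nu(z)}\bigr)$, or to produce approximate eigenvectors/quotient arguments directly for each $\mu\prec\nu$. The identification of the second diagonal block with multiplication by $\lambda_k$ on $E_\nu$ and of its spectrum with the essential range is fine once this is in place.
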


\begin{proof}
	(i) Note that $\lambda$ is a regular point of the operator $\bA$ if and only if
	\begin{equation*}
		\det(A(z) - \lambda I) \neq 0, \quad \textrm{for all} \ z \in \T.
	\end{equation*}
Since	
		\begin{equation*}
		\det(A(z) - \lambda I) = 	\det(A_0(z) - \lambda I) = \prod_{j=1}^d (\lambda_j(z) - \lambda),
	\end{equation*}
where $\lambda_j(z)$ denote the eigenvalues of $A(z)$ for $z \in \T$, the assertion follows.

(ii) Similar arguments can be applied to the operators $\bA \bP_\nu$, $(I - \bP_\nu)\bA $ which in turn represent convolution operators corresponding to the symbols $\bA(z) \bP_\nu(z)$, $(I - \bP_\nu(z))\bA(z)$, respectively.
\end{proof}

The statement in (ii) expresses the separation property of the spectrum with respect to the chain $\bbP$ for the operator $\bA$.

\section{Hilbert space case} \label{2.2:Hilbert}

In this section, the described spectral representation will be adapted and spe\-ci\-fied for the special case of Hilbert space.

The block convolution operator $\bA$  defined by \eqref{2.1:op_splot} under the condition \eqref{2.1:war_zb} is assumed to be acting on the Hilbert space of (block) sequences $\eld$. The symbol $A(z),$ $z \in \T$, of the operator $\bA$ is a matrix-valued function that belongs to the Hilbert space $\ELDM$ whose Fourier coefficients are $A_n$, $n \in \Z$, that is, those given by \eqref{2.1:wspl_Fourier_znorm}. The relationship between operator $\bA$ and its symbol $A(z),$ $z \in \T$, is given by the Fourier transformation $\bF$ on $\EL$, namely,
\begin{equation*}
	(\bF^* \bA \bF u)(z) = \bA(z)u(z), \quad z \in \T,
\end{equation*}
where $u \in \EL$. Note that $\bF u$ represents the sequence $(u_n)$ of elements $u_n$, where $u_n$ are the Fourier coefficients of the expansion of the function \linebreak $u \in \EL$ in the Fourier series, i.e., whenever
\begin{equation*}
	u(z) = \sum_{n \in \Z}u_n z^n
\end{equation*}
with
\begin{equation*}
	u_n = \int_{\T} u(z) z^{-n} dz, \quad n \in \Z.
\end{equation*}

In the framework of taken approaches, following arguments applied in the previous section, on the space $\EL$ one can define the multiplication ope\-rators
\begin{align}\label{2.2:op_mnoz_symb}
    (\bfA u)(z) &= A(z)u(z),\\
	\label{2.2:op_mnoz_P_k}({\bf P}_ku)(z) & = P_k(z) u(z),  \\
	\nonumber (\Delta {\bf P}_ku)(z) & = \Delta P_k(z) u(z), 
\end{align}
for $z \in \T$ a.e.; $k =1, \dots, d,$ and $u \in \EL$, where $A(z)$, $\Delta P_k (z)$, $P_k (z)$ are given by \eqref{2.1:def_mac}, \eqref{2.4:skok}, \eqref{2.4:proj}, respectively. The operators $\bfP_k$, $\Delta {\bf P}_k$ $(k=1, \dots, d)$ are orthogonal projections on $\EL$. Moreover, owing to \eqref{2.4:skok}, \eqref{2.4:proj} and \eqref{2.4:monot},  we get the property of monotonicity for the family of the projections
\begin{equation*}
	\bfP_k = \sum_{l=1}^k \Delta \bfP_l, \quad k=1, \dots, d,
\end{equation*}
i.e.,
\begin{equation*}
\bfP_k \bfP_l = \bfP_l \bfP_k =\bfP_k, \quad k < l; \ k,l = 1, \dots, d,
\end{equation*}
hence
\begin{equation}\label{4.1:lancuch_P_k}
	\overline \bfP: \quad  \bfO = \bfP_0 \ < \ \bfP_1 \ < \ \dots \ < \ \bfP_d = \bfI
\end{equation}
is a chain of orthogonal projections on $\EL $.

Taking into account \eqref{2.2:op_mnoz_symb} and \eqref{2.2:op_mnoz_P_k}, due to relation \eqref{2.4:niezm_mac}, we have
\begin{equation*}
	({\bfP}_k\bfA \bfP_k u)(z) = P_k(z) A(z) P_k(z) u(z) = A(z)P_k(z) u(z) = (\bfA \bfP_k u)(z),
\end{equation*}
for any $z \in \T$ and any $k = 1, \dots, d$. So
\begin{equation}\label{4.1:niezm_lanc}
	{\bfP}_k\bfA \bfP_k = \bfA \bfP_k, \quad k = 1, \dots, d,
\end{equation}
which means that the chain defined by \eqref{4.1:lancuch_P_k} is invariant under the operator $\bfA$.

Further, we define the operators $\bfA_0$ and $\bfA_+$ on the space $\EL$ by 
\begin{equation*}
	(\bfA_0 u)(z) = A_0(z)u(z), \quad z \in \T,
\end{equation*}
and
\begin{equation*}
	(\bfA_+ u)(z) = A_+(z)u(z), \quad z \in \T,
\end{equation*}
where $A_0(z)$ and $A_+(z)$ are determined by \eqref{2.4:rozklad_mac_A0} and \eqref{2.4:rozklad_mac_A+}, respectively. Due to \eqref{2.4:rozklad_mac}, we obtain the additive decomposition
\begin{equation}\label{4.1:rozkl_op_mnoz_A}
	\bfA = \bfA_0 + \bfA_+.
\end{equation}

From the commutative relations of the matrix-valued function $A_0(z)$ and the invariance of $A_+(z)$ with respect to the family of projections  \linebreak $P_k(z)$ $(k = 1, \dots, d)$ it follows that the chain \eqref{4.1:lancuch_P_k} is invariant under the operator $\bfA_+$ and, accordingly, is reducing for $\bfA_0$, i.e.,
\begin{equation}\label{4.1:niezm_lanc_A_+}
	{\bfP}_k\bfA_+ \bfP_k = \bfA_+ \bfP_k, 
\end{equation}
and
\begin{equation}\label{4.1:red_lanc_A_0}
\bfP_k \bfA_0  = \bfA_0 \bfP_k,
\end{equation}
for $k = 1, \dots, d$.

The projections $\Delta {\bf P}_k \, (k=1, \dots, d)$ are mutually disjoint, i.e.,
\begin{equation*}
\Delta {\bf P}_k \Delta {\bf P}_l = \delta_{kl}\bfI, \quad   k, l=1, \dots, d,
\end{equation*}
and form a complete system
\begin{equation*}
	\sum\limits_{k=1}^d \Delta {\bf P}_k = \bfI.
\end{equation*}
Therefore, the space $\cH = \EL$ admits the orthogonal decomposition
\begin{equation}\label{4.1:decomp_H}
	\cH = \sum\limits_{k=1}^d \bigoplus \cH_k,
\end{equation}
where
\begin{equation*}
	\cH_k := \Delta {\bf P}_k \cH, \quad k=1, \dots, d.
\end{equation*}

\noindent From \eqref{4.1:niezm_lanc_A_+} and \eqref{4.1:red_lanc_A_0} it follows that the decomposition \eqref{4.1:decomp_H} of $\cH$ consists of invariant subspaces for $\bfA$, and reducing for $\bfA_0$. Moreover, 
\begin{equation*}
	\bfA_0 \Delta {\bf P}_k = \lambda_k(z) \Delta {\bf P}_k, \quad k = 1, \dots, d.
\end{equation*}
Denoting $\bfA_{0, k} = \bfA_0|_{\cH_k}$, where $\bfA_0|_{\cH_k}$ designate the part of $\bfA_0$ on $\cH_k$, one can write
\begin{equation}\label{2.4:A0_suma_ort}
	\bfA_0 = \sum\limits_{k=1}^d \bigoplus \bfA_{0, k}.
\end{equation}

Next, consider the operators $V_k: \cH_k \longrightarrow \ELj$ defined by 
\begin{equation*}
(V_ku)(z) := \left\<u(z), \varphi^{(k)}(z)\right\>, \quad z \in \T\  a.e.,
\end{equation*}
for $u \in \cH_k$ ($k=1, \dots, d$). It is seen that each of the operators $V_k$ is invertible, and
\begin{equation*}
	(V_k^{-1}\varphi)(z) := \varphi(z)\varphi^{(k)}(z), \quad z \in \T \ a.e.,
\end{equation*}
for $\varphi \in \ELj$ and $k = 1, \dots, d$. Since
\begin{equation*}
	V_k\bfA_{0, k} = \bfA_{k}V_k, \quad k=1, \dots, d,
\end{equation*}
where $\bfA_k$ denotes the multiplication operator by $\lambda_k(z)$ in the space $\ELj$, i.e.,
\begin{equation}\label{4.1:H_mult}
	(\bfA_{k}\varphi)(z)= \lambda_k(z) \varphi(z), \quad z \in \T \ a.e.,
\end{equation}
for $\varphi \in \ELj$, due to \eqref{2.4:A0_suma_ort}, the following decomposition for the operator $\bfA_0$ holds
\begin{equation}\label{4.1:rozkl_A_0}
	\bfA_0 = \sum\limits_{k=1}^d \bigoplus V_k^{-1} \bfA_kV_k.
\end{equation}

Now, for every $k \in 1, \dots, d$, we give a construction of a chain defined  on  the space $\ELj$ reducing the operator  $\bfA_{k}$. To this end, recall that each contour \linebreak $\Gamma_k = \lambda_k(\T)$ is oriented according to the increase of the argument $z$ on $\T$. \linebreak Accordingly, for some point $\nu \in \Gamma_k$ we define on the space $\ELj$ the operator
\begin{equation*}
	(\bfP_{k, \nu} \varphi)(z) = \chi_{[\alpha_k, \nu)}(\lambda_k(z))\varphi(z), \quad z \in \T \ a.e.,
\end{equation*}
for $\varphi \in \ELj$. It is clear that $\bfP_{k, \nu} $ represents an orthogonal projection on the space $\ELj$. In addition, for any $\nu, \mu \in \Gamma_k$ such that $\nu \prec \mu$, the relation $\bfP_{k, \nu} < \bfP_{k, \mu}$ holds true. Thus, we obtain the following chain
\begin{equation}\label{4.1:chain_P_k,nu_H}
\bfO = \bfP_{k, \alpha_k} \ <  \ \bfP_{k, \nu}  \ < \ \bfP_{k, \mu} \ <  \ \bfP_{k, \beta_k} = \bfI, \quad \nu \prec \mu, \ \nu, \mu \in \Gamma_k.
\end{equation}
Since, in view of \eqref{4.1:H_mult} and \eqref{4.1:rozkl_A_0} , 
\begin{align*}
	(\bfP_{k, \nu} \bfA_{k} \varphi)(z) = \lambda_k(z)\chi_{[\alpha_k, \nu)}(\lambda_k(z))\varphi(z) = ( \bfA_{k}\bfP_{k, \nu} \varphi)(z)
\end{align*}
for every $\varphi \in \ELj$,  $\nu \in \Gamma_k$ and $z \in \T \ a.e.$, it follows
\begin{equation*}
	\bfP_{k, \nu}\bfA_{k} =  \bfA_{k}\bfP_{k, \nu}, \quad  \nu \in \Gamma_k,
\end{equation*}
hence, the chain defined by \eqref{4.1:chain_P_k,nu_H} is reducing for $\bfA_k$ for each $k = 1, \dots, d.$

Next, we let
\begin{equation*}
	\bfP_{\nu} = \sum\limits_{k=1}^d V_k^{-1} \bfP_{k, \nu} V_k, \quad \nu \in \Gamma.
\end{equation*}
$\bfP_{\nu}$ are orthogonal projections on $\cH$. For any $\nu \in \Gamma_k$ the projection $\bfP_{\nu}$ can be written in the form
\begin{equation} \label{4.1:H_p_nu}
	\bfP_{\nu} =  {\bf P}_{k-1} + \chi_{[\alpha_k, \nu)}(\lambda_k(\cdot))\Delta {\bf P}_k.
\end{equation}
It is clear that $\bfP_{\nu} < \bfP_{\mu}$ whenever $\nu \prec \mu$, hence
\begin{equation}\label{4.1:H_lan_symb}
	\bfP: \quad  \bfO = \bfP_{\alpha_1 } \ < \ \bfP_\nu \ < \ \bfP_\mu \ <  \bfP_{\beta_d}  = \bfI, \quad \nu \prec \mu\quad \nu, \mu \in \Gamma,
\end{equation}
forms a chain on the space $\cH$ ($= \EL$).

Since
\begin{equation*}
	\bfP_\nu \bfA_0  = \bfA_0 \bfP_\nu,\quad \nu \in \Gamma
\end{equation*}
the chain $\bfP$ given by \eqref{4.1:H_lan_symb} is reducing for the operator  $\bfA_0$. Each of the subspaces $\bfP_k \cH$ $(k = 1, \dots, d)$ is invariant with respect to the operator $\bfA$ (cf. \eqref{4.1:niezm_lanc}). According to representation \eqref{4.1:H_p_nu}, we conclude that the chain $\bfP$  is invariant under the operator $\bfA$.

As a result, we obtain that the multiplication operator $\bfA$ defined on the space $\EL$ admits the additive upper triangular decomposition \eqref{4.1:rozkl_op_mnoz_A} relative to the chain $\bfP$ given by \eqref{4.1:H_lan_symb}.

If $\nu \in \Gamma$, that is, $\nu \in \Gamma_k$ for some $k = 1, \dots, d$, then, as is easily seen, the restriction of the operator $\bfA_0$ to the subspace $\bfP_\nu \cH$ (recall that $\cH := \EL$) is of the form
\begin{equation}\label{4.1:H_post_A0P_nu}
\bfA_0 \bfP_{\nu} =  \sum\limits_{l=1}^{k-1} \lambda_l(\cdot)\Delta {\bf P}_l + \chi_{[\alpha_k, \nu)}(\lambda_k(\cdot)) \lambda_k(\cdot) \Delta {\bf P}_k.
\end{equation}
From the relation \eqref{4.1:H_post_A0P_nu}, taking into account that $\bfA_0$ is the diagonal of the operator $\bfA$ with respect to the chain $\bfP$, we conclude that
\begin{equation*}
	\sigma(\bfA \bfP_{\nu})	=  \overline{\big\{\mu \in  \Gamma: \mu \prec \nu \big\}},
\end{equation*}
and, similarly,
\begin{equation*}
	\sigma(\bfA (\bfI - \bfP_{\nu}) ) = \overline{\big\{\mu \in  \Gamma: \nu \prec \mu \}},
\end{equation*}
which reflects the property of separation of the spectrum of the operator $\bfA$ with respect to the chain $\bfP$.

Next, we let
\begin{equation}\label{4.1:H_splot_P_nu}
	\bP_\nu = \bF \, \bfP_\nu \, \bF^*, \quad \nu \in \Gamma,
\end{equation}
where $\bF$ denotes the Fourier operator acting from $\EL$ onto $\eld$, which to the element $u$ in $\EL$ assigns the sequence $(u_n)$ of its corresponding Fourier coefficients. $\bP_\nu$ are orthogonal projections on the space $\eld$ for which
\begin{equation*}
	\bP_\nu \bA \bP_\nu = \bA \bP_\nu, \quad \nu \in \Gamma.
\end{equation*}
Moreover,
\begin{equation*}
	\bbP: \quad  O = \bP_{\alpha_1} \ < \  \bP_\nu  \ < \  \bP_\mu \ < \  \bP_{\beta_d} = \bI, \quad \nu \prec \mu, \quad \nu, \mu \in \Gamma,
\end{equation*}
is a chain invariant under the block convolution operator $\bA$ defined by \eqref{2.1:op_splot}. \linebreak In turn, the operator $\bA$ admits an additive upper triangular decomposition
\begin{equation}\label{4.1:H_decomp_A}
	\bA = \bA_0 + \bA_+,
\end{equation}
where $	\bA_0 = \bF \, \bfA_0 \, \bF^*$ and $	\bA_+ = \bF \, \bfA_+ \, \bF^*$ is the upper triangular part of $\bA$ relative to the chain $\bbP$. Note that $\bA_+$ is a nilpotent operator of an index $l \le d$. Clearly, the separation property for the spectrum of the operator $\bA$ is also fulfilled.

We have proved the following result.

\begin{theorem} \label{2.2:tw_hilbert} Let $\bA$ be the block convolution operator defined on the Hilbert space $\eld$ by \eqref{2.1:op_splot}. Then
	
	\begin{enumerate}[label={\textup{(\roman*)}}, widest=iii, leftmargin=*]
		\item the chain $\bbP$ of orthogonal projections $\bP_\nu$ $(\nu \in \Gamma)$ given by \eqref{4.1:H_splot_P_nu} is maximal on $\eld$, and invariant under the operator $\bA$;
		\item  $\bA$ admits the additive upper triangular decomposition \eqref{4.1:H_decomp_A} with respect to $\bbP$, where
		\begin{equation*}
			\bA_0 =  \int_{\bbP} (d P) \bA (d P),
		\end{equation*}
	is the diagonal of $\bA$, and the corresponding upper triangular part $\bA_+$ of $\bA$ is a nilpotent operator of an index $l \le d$;	
		\item the separation property for the spectrum of  $\bA$ holds true. Moreover
		\begin{equation*}
			\sigma(\bA) = 	\sigma(\bA_0) = \bigcup_{k=1}^d \overline {\lambda_k(\T)},
		\end{equation*}
	where $\lambda_k(z)$ denote the eigenvalues of the symbol $A(z)$, $z \in \T.$
	\end{enumerate}
 
\end{theorem}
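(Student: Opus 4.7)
The plan is to transfer everything that has already been established on the multiplication operator side to the convolution operator side via the Fourier transform $\bF:\EL\to\eld$, which is a unitary operator intertwining $\bfA$ and $\bA$ in the sense that $\bA=\bF\,\bfA\,\bF^{*}$. Since $\bfP_\nu$ are orthogonal projections on $\EL$ and $\bF$ is unitary, the operators $\bP_\nu=\bF\,\bfP_\nu\,\bF^{*}$ are automatically orthogonal projections on $\eld$, and the order relation $\bP_\nu<\bP_\mu\iff\bfP_\nu<\bfP_\mu$ is preserved, so $\bbP$ inherits the chain structure of $\bfP$.

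For part (i), invariance $\bP_\nu\bA\bP_\nu=\bA\bP_\nu$ follows by conjugating the corresponding identity $\bfP_\nu\bfA\bfP_\nu=\bfA\bfP_\nu$ (which was verified on $\EL$ using $P_\nu(z)A(z)P_\nu(z)=A(z)P_\nu(z)$). Maximality of $\bbP$ is equivalent to maximality of $\bfP$, which is in turn essentially the maximality of the pointwise chain $\{P_\nu(z)\}$ at almost every $z\in\T$: indeed, any strict refinement of $\bfP$ would, restricted to multiplication operators, produce at some $z\in\T$ a refinement of the Schur chain $\pi(z)=\{P_0(z)<\dots<P_d(z)\}$, but the latter is a maximal chain on $\C^d$ since its jumps are one-dimensional. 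This is the step where I expect the main technical subtlety: one must show rigorously that no orthogonal projection strictly between two consecutive $\bfP_\nu,\bfP_\mu$ (with no $\eta\in\Gamma$ satisfying $\nu\prec\eta\prec\mu$) can commute with all of $\bfP$.

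For part (ii), the decomposition $\bA=\bA_0+\bA_+$ follows by applying $\bF\cdot\bF^{*}$ to $\bfA=\bfA_0+\bfA_+$ from \eqref{4.1:rozkl_op_mnoz_A}, and the identification of $\bA_0$ as the diagonal with respect to $\bbP$ follows because the chain $\bfP$ is reducing for $\bfA_0$ and invariant for $\bfA_+$ (arguments already spelled out on $\EL$), properties that are preserved under unitary conjugation. The integral representation $\bA_0=\int_{\bbP}(dP)\bA(dP)$ is then an instance of \eqref{1.2:repr_diag_calk_lan}. For nilpotency, since $(\bfA_+u)(z)=A_+(z)u(z)$ and $A_+(z)$ is upper triangular nilpotent of index $l(z)\le d$ for each $z\in\T$, we have $(A_+(z))^d=0$ for all $z$, whence $\bfA_+^d=0$, and therefore $\bA_+^d=\bF\,\bfA_+^d\,\bF^{*}=0$, giving nilpotency index at most $d$.

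For part (iii), since $\bA$ and $\bfA$ are unitarily equivalent, $\sigma(\bA)=\sigma(\bfA)$, and the same holds for the diagonal parts. The spectrum of a multiplication operator by a matrix-valued function $A(\cdot)$ on $\EL$ is determined by the values $\lambda$ for which $A(z)-\lambda I$ fails to be invertible on a set of positive measure; by continuity of the eigenvalues $\lambda_k(z)$ in $z$ (which was invoked earlier from \cite{ostrowski,rellich}) this set is $\bigcup_{k=1}^d\overline{\lambda_k(\T)}$. Since $\bfA_0$ has the same pointwise eigenvalues as $\bfA$ on account of $\det(A(z)-\lambda I)=\prod_k(\lambda_k(z)-\lambda)$, we obtain $\sigma(\bA)=\sigma(\bA_0)=\bigcup_{k=1}^d\overline{\lambda_k(\T)}$. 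The separation property $\sigma(\bA\bP_\nu)=\overline{\{\mu\in\Gamma:\mu\prec\nu\}}$ and its counterpart for $\bI-\bP_\nu$ follow by applying the same eigenvalue analysis to the cut-off symbols $A(z)P_\nu(z)$ and $A(z)(I-P_\nu(z))$, using \eqref{4.1:H_post_A0P_nu}.
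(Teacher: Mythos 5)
Your proposal follows essentially the same route as the paper: the paper's ``proof'' of Theorem \ref{2.2:tw_hilbert} is precisely the development of Section 4, namely establishing the chain, the decomposition $\bfA=\bfA_0+\bfA_+$, the nilpotency and the spectral separation for the multiplication operators on $\EL$, and then transporting everything to $\eld$ by the unitary conjugation $\bP_\nu=\bF\,\bfP_\nu\,\bF^{*}$, $\bA_0=\bF\,\bfA_0\,\bF^{*}$, $\bA_+=\bF\,\bfA_+\,\bF^{*}$. The one step you flag as subtle --- maximality of $\bbP$ --- is likewise only asserted, not argued in detail, in the paper, so your sketch matches the paper's level of rigour there as well.
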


\section{Applications. Jacobi matrices}

In this section we shall deal with the operators generated by the so-called periodic Jacobi matrices. This kind of matrices appear naturally in the many problems of mathematics, physics, mathematical physics, etc. As a particular case, but of a special merit, it can be mentioned the Born-von Karman model of the crystal lattice (cf. \cite{Glaz}). In the simplest case, this model is nothing more than a one-dimensional infinite chain of atoms. The equations of motion in the Born-von Karman model do not differ from those for beads on an elastic string and have the form
\begin{equation*}
	\frac{d^2 u}{dt^2} = \tilde A u,
\end{equation*}
where $\tilde A = [a_{rs}]_{r,s \in \mathbb{Z}}$ is a complex a \textit{$d$-periodic banded Jacobi matrix of order $k$}, i.e., the matrix whose entries satisfy the following conditions:
\begin{enumerate}[label={\textup{(\roman*)}}, widest=iii, leftmargin=*]
	\item there exists $k \in \N$ such that for all $r,s \in \mathbb{Z}$ we have $a_{rs}  = 0$ if $|r-s| > k$,
	\item there exists $r, s \in \mathbb{Z}$ such that $a_{rs}  \neq 0$ if $|r-s| = k$,
	\item  there exists $d \in \N$ such that for all $r, s \in \mathbb{Z}$ we have  $a_{r + d s + d} = a_{rs}$.
\end{enumerate}
Finding the eigenfrequencies is reduced to the spectral analysis of the periodic Jacobi matrix $\tilde A$.

Denote by $A_l$ ($l \in \Z$) a matrix with entries $(A_l)_{rs} = a_{rs}$ \linebreak ($r = 0, \ldots, d-~1$ and $s = -d l, \ldots, d(1-l)-1$). Note that $A_l \in M_d(\C)$ $(l \in \Z)$ and  $A_l = 0$ for $|l| > m$, where $m = \lceil \frac{k}{d} \rceil$ (i.e. $m$ is the smallest integer not less than $\frac{k}{d}$). Under the above notation, the matrix $\tilde A$ has a block form 
\begin{equation*}
	A =\left[ \begin{array}{ccccccccccc}
		\ddots &  & \ddots & \ddots & \ddots &  & \ddots &  &  &  &    \\
		& A_m & \dots & A_1 & A_0 & A_{-1} & \dots &  A_{-m} &  &  & \\
		&  & A_m & \dots  & A_1 & A_0 & A_{-1} & \dots &  A_{-m} &  &   \\
		&  &  & A_m  & \dots & A_1 & A_0 & A_{-1} & \dots &  A_{-m} &   \\
		&  &  &  & \ddots &  & \ddots & \ddots & \ddots &  & \ddots   \\
	\end{array} \right ].
\end{equation*}

Consider the operators 

\begin{equation} \label{3.2:Laurent-definicja-dwa}
	(\tilde A v)_n = \sum_{j = n-k}^{n+k} a_{nj}v_{j}, \quad  n \in \Z,
\end{equation}
where $v = (v_n) \in \elpj$, and
\begin{equation} \label{3.2:Laurent-definicja-trzy}
	(\bA u)_n = \sum_{j=n-m}^{n+m} A_{n-j} u _{j}, \quad n \in \Z,
\end{equation}
for $u = (u_n) \in \elp$, acting on the Banach spaces $\elpj$ and $\elp$ ($1 \le p < \infty$)  and corresponding to the matrices $\tilde A$ and $A$, respectively. Using the matrix reduction procedure from \cite[pp. 139--140]{cojuhari} we can show that the operator $\tilde A$ defined by \eqref{3.2:Laurent-definicja-dwa} is similar (and for $p = 2$ -- unitarily equivalent) to the convolution operator $\bA$ defined by \eqref{3.2:Laurent-definicja-trzy}. Since 
\begin{equation*} 
	\sum_{j=-m}^{m} |A_{j}| < \infty,
\end{equation*}
it follows that $A$ is bounded, and its symbol $A(z),$ $z \in \T$,  is a matrix-valued rational function determined by
\begin{equation*}
	A(z) = \sum_{j=-m}^{m} A_{j} z^j, \quad z \in \T.
\end{equation*}
Since the operator $A$ defined by \eqref{3.2:Laurent-definicja-trzy} satisfies the assumptions of Theorem \ref{3:tw_3.3}, then it admits the triangular decomposition \eqref{2.2:dekompoz_op_splot}.

Note that, using the matrix reduction procedure described above to the ope\-rator $A$ given by \eqref{3.2:Laurent-definicja-trzy}, we get a convolution operator acting on the space $l_p(\Z; \C^{md})$ $(1 \le p < \infty)$ corresponding to a tridiagonal block matrix, which is similar (and for $p = 2$ -- unitarily equivalent) to $\bA$. Obviously, if we assume that
\begin{equation*}
J_1 = \left[ \begin{array}{cccc}
		A_m  & A_{m-1}  &   \cdots   &    A_{1}   \\
		0  & A_{m}  &   \cdots   &    A_{2} \\
		\vdots   & \vdots  & \ddots  &   \cdots        \\
		0  & 0  &   \cdots   &    A_{ m} \\
	\end{array} \right ], \quad
J_{-1} = \left[ \begin{array}{cccc}
		A_{-m}  & 0  &   \cdots   &    0   \\
		A_{-m + 1}  & A_{-m}  &   \cdots   &    0 \\
		\vdots   & \vdots  & \ddots  &   \cdots        \\
		A_{ - 1}  & A_{- 2}  &   \cdots   &    A_{ -m} \\
	\end{array} \right ], 
\end{equation*}

\begin{equation*}
	J_0 = \left[ \begin{array}{cccc}
		A_0  & A_{-1}  &   \cdots   &    A_{ -m + 1}   \\
		A_1  & A_{0}  &   \cdots   &    A_{ -m + 2} \\
		\vdots   & \vdots  & \ddots  &   \cdots        \\
		A_{m - 1}  & A_{m - 2}  &   \cdots   &    A_{ 0} \\
	\end{array} \right ], 
\end{equation*}
then $J_{-1}, J_0, J_1 \in M_{md}(\C),$ and the matrix $A$ has the following block tridiagonal form
\begin{equation}\label{3.2:Laurent-definicja-piec}
	J = \left[ \begin{array}{cccccc}
		\ddots & \ddots &  &  &  &    \\
		\ddots & J_0 & J_{-1} &  &  &    \\
		& J_1 & J_0 & J_{-1} &  &    \\
		& & J_1 & J_0 & \ddots &    \\
		&  &  & \ddots & \ddots &    \\
	\end{array} \right ].
\end{equation}
It follows that an operator 
\begin{equation} \label{3.2:Laurent-definicja-cztery}
	(J u)_n = \sum_{j=n-1}^{n+1} J_{n-j} u _{j}, \quad n \in \Z
\end{equation}
where $u = (u_n) \in l_p(\Z; \C^{md})$, is similar to the operator $A$ given by \eqref{3.2:Laurent-definicja-trzy}. Therefore, the operator $\tilde A$ defined by \eqref{3.2:Laurent-definicja-dwa} acting on the Banach space $\elj$, corresponding to the $d$-periodic banded Jacobi matrix of order $k$, is similar to the block convolution operator \eqref{3.2:Laurent-definicja-cztery} acting on the Banach space $l_p(\Z; \C^{md})$, corresponding to the tridiagonal block matrix \eqref{3.2:Laurent-definicja-piec}.

Let $\tilde A = [a_{r s}]_{r,s \in \mathbb{Z}}$ be a $d$-periodic banded Jacobi complex matrix of order~$1$ expressed as follows
\begin{equation}\label{3:najman_A}
	\tilde A = \left[ \begin{array}{ccccccccc}
		& \ddots      & \ddots &          &          &          &          &       &\\
		\ddots & b_d         & a_d    & c_d      &          &          &          &       &\\
		&    & b_1    & a_1      & c_1      &          &          &       &\\
		&     &        & \ddots   & \ddots   & \ddots   &          &       &\\
		&     &        &          & b_{d}  & a_{d}  & c_{d}  &       &\\
		&     &        &          &          & b_1      & a_1      & \ddots &\\
		&     &        &          &          &          & \ddots   & \ddots  &  \\
	\end{array} \right ].
\end{equation}

\noindent If we use the matrix reduction procedure for $\tilde A$, then we get its tridiagonal form
\begin{equation}\label{3:Najman_blok}
	\bA = \left[ \begin{array}{cccccc}
		\ddots & \ddots &  &  &  &    \\
		\ddots & A_0 & A_{-1} &  &  &    \\
		& A_1 & A_0 & A_{-1} &  &    \\
		& & A_1 & A_0 & \ddots &    \\
		&  &  & \ddots & \ddots &    \\
	\end{array} \right ],
\end{equation}
where $A_{1}, A_{-1}, A_0 \in M_d(\C)$ are expressed as
\begin{equation*}
	A_1 = \left[ \begin{array}{cccc}
		0 & \cdots &   0   &    b_1   \\
		0 & \cdots  & 0   &    0 \\
		\vdots  &   & \vdots   &   \vdots        \\
		0 & \cdots & 0 & 0
	\end{array} \right ], \quad
	A_{-1} = \left[ \begin{array}{cccc}
		0 &    0   &  \cdots &   0  \\
		\vdots  &    \vdots &  &   \vdots        \\
		0 & 0  & \cdots &  0       \\
		c_d  & 0 & \cdots &  0
	\end{array} \right ],
\end{equation*}
and
\begin{equation*}
	A_0 = \left[ \begin{array}{cccccc}
		a_1  & c_1  &   0   &    \cdots  &  0  \\
		b_2  & a_2  & c_2   &    \cdots  &  0  \\
		0   & b_3  & a_3   &   \cdots  &  0      \\
		\vdots     &  &  \ddots    & \ddots & \vdots\\
		0  &  \cdots  &       & b_{d }  & a_{d } \\
	\end{array} \right ],
\end{equation*}
respectively.

Repeating arguments presented above we can deduce that the operator defined by \eqref{3.2:Laurent-definicja-dwa} corresponding to the matrix $\tilde A$ given by \eqref{3:najman_A} is similar to the operator defined by \eqref{3.2:Laurent-definicja-trzy} corresponding to the matrix $A$ given by \eqref{3:Najman_blok}, so 
\begin{equation*}
	\sigma(\tilde A) = \sigma(A),
\end{equation*}
and $\tilde A$ admits the triangular decomposition \eqref{2.2:dekompoz_op_splot}.

Now, on the Banach space $\elp$ consider the operator $\bA$ of the form \eqref{3.2:Laurent-definicja-trzy} corresponding to the matrix $A$ given by \eqref{3:Najman_blok}. Its symbol is a matrix-valued rational function
\begin{equation*} 
	{A}(z) = A_{-1} z^{-1} + A_0 + A_1z, \quad z \in \T.
\end{equation*}
Since spectral properties of $\bA$ are related to properties of $A(z),$ $z \in \T$, we show how the eigenvalues of the matrix $A(z)$ depend on the selection of elements \linebreak of the matrices $A_{-1}$, $A_0$ and $A_1$.

Let $z \in \T$. The determinant of a matrix ${A}(z) - \lambda I$ has the following form
\begin{equation*}\label{3.2:wyzn_okr}
	\det \left(A(z) - \lambda I\right) = (-1)^{d}\big(Q(\lambda) - bz - cz^{-1}\big),
\end{equation*}
\noindent where $Q(\lambda)$ is a polynomial of degree $d$ of the variable $\lambda$, expressed as
\begin{align*}
	  Q(\lambda) & = (-1)^{d} \cdot \det \left[ \begin{array}{ccccc}
		a_1-\lambda  & c_1  &       \cdots  &  0  \\
		b_2  & a_2 -\lambda  &     \cdots  &  0  \\
		\vdots     &  &  \ddots    & \vdots\\
		0  &  \cdots         & b_{d }  & a_{d } - \lambda \\
	\end{array} \right ] \\
	&\quad  + (-1)^{d+1} b_1c_d \cdot \det \left[ \begin{array}{ccccc}
		a_2 - \lambda  & c_2     &    \cdots  &  0  \\
		b_3  & a_3 - \lambda    &    \cdots  &  0  \\
		\vdots     &     & \ddots & \vdots\\
		0  &  \cdots  &        b_{d-1 }  & a_{d-1 } - \lambda \\
	\end{array} \right ],
\end{align*}
and  $b = b_1 \cdots b_d$, $c = c_1 \cdots c_d$. Therefore $\det (A(z) - \lambda I) = 0$ if and only if
\begin{equation}\label{3.2:wyzn_el}
	Q(\lambda) = bz + cz^{-1}.
\end{equation}

Let $\E$ be a subset of $\C$ expressed as follows
\begin{equation*}
	\E = \left\{bz + cz^{-1}: z \in \T \right\}.
\end{equation*}
Since the solutions of the equation \eqref{3.2:wyzn_el} belong to the set
\begin{equation*}
Q^{-1}[\E] = \left\{\lambda \in \C: Q(\lambda) \in \E\right\},
\end{equation*}
then $Q^{-1}[\E]$  consists of all eigenvalues of $A(z)$, $z \in \T$ and obviously 
\begin{equation*}
\sigma(\tilde A) = 	\bigcup_{k=1}^d \overline{\lambda_k(\T)} = Q^{-1}[\E].
\end{equation*}

Note that if:
\begin{enumerate}[label={\textup{(\roman*)}}, widest=iii, leftmargin=*]
    \item $b, c \neq 0$ and $|b| \neq |c|$, then the set $\E$ is an ellipse and $\sigma(\tilde A)$ is~a family of  closed curves. Moreover, these curves intersect at a maximum of $d-1$ points;
	\item $b\neq 0 $ and $c = 0$ (or analogously $c \neq 0$ and  $b = 0$), then the set $\E$ is a~circle. The spectrum of the operator $\tilde A$ is a family of closed curves which are intersect at a maximum of $d-1$ points;
	\item  $b, c \neq 0$ and $|b| = |c|$, then the set $\E$ is a line segment, so a spectrum of~the operator $\bA$ is a family of smooth curves. In this case, the curves can intersect at a maximum of $2\left[\frac{d-1}{2}\right]$ points;
	\item $b=0$ and $c=0$, then the spectrum of $\bA$ is a set at most $d$-elements.
\end{enumerate}

Now we give some examples of triangular decompositions and invariant chains under the convolution operators.

\begin{example}
Let $\bA \in \elpd$ be a convolution operator corresponding to a~symbol $A(z)$, $z \in \T$, of the following form
	\begin{equation*}
		A(z) =
		\left[ \begin{array}{ll} 
			z  & 0  \\
			1 & 0 
		\end{array} \right], \quad z \in \T.
	\end{equation*}	
Since $A(z)$  has eigenvalues of the form $\lambda_1(z) = z$ and $\lambda_2(z) = 0$, where $z \in \T$, it follows that $\sigma(A) = \T \cup \left\{0\right\}$.
	
We can apply construction given in Subsection \ref{sub_schur} to build an invariant chain under the operator $A$.  Let $z \in \T$. Using the Gram-Schmidt orthogonalization to the eigenvector corresponding to $\lambda_1(z)$ we get a Schur base for $A(z)$, i.e., a~vectors of the following form 	
	\begin{equation*}
		\varphi^{(1)}(z) = \frac{1}{\sqrt{2}}
		\left[ \begin{array}{l} 
			z    \\
			1  
		\end{array} \right], \quad 
			\varphi^{(2)}(z) = \frac{1}{\sqrt{2}} \left[ \begin{array}{l} 
			-z    \\
			~~1  
		\end{array} \right].
	\end{equation*}	
Then	
	\begin{equation*}
		U(z) = \frac{1}{\sqrt{2}}
		\left[ \begin{array}{ll} 
			z & -z   \\
			1 & ~~1  
		\end{array} \right]
	\end{equation*}
is a unitary matrix and	
	\begin{equation*}
		T(z) = (U(z))^*A(z)U(z) =
		\left[ \begin{array}{ll} 
			z & -z   \\
			0 & ~~0  
		\end{array} \right]
	\end{equation*}
is an upper triangular matrix, unitarily equivalent to matrix $A(z)$. Orthogonal projections on the one-dimensional subspaces of $\C^2$ generated by the Schur vectors $\varphi^{(1)}(z)$ and $\varphi^{(2)}(z)$ are expressed as	
	\begin{align*}
		& 	\Delta P_1(z) = \<\cdot, \varphi^{(1)}(z) \>\varphi^{(1)}(z) = \frac{1}{2}
		\left[ \begin{array}{ll} 
			1 & z   \\
			z^{-1} & 1  
		\end{array} \right], \\
		& \Delta P_2(z) = \<\cdot, \varphi^{(2)}(z) \>\varphi^{(2)}(z) = \frac{1}{2}
		\left[ \begin{array}{ll} 
			~~1 & -z   \\
			-z^{-1} & ~~1  
		\end{array} \right],
	\end{align*}	
respectively. Hence the set
	\begin{equation*}
		\pi(z): \quad  O = P_0(z) < P_1(z) <  P_2(z) = I,
	\end{equation*}
where	
	\begin{equation*}
P_1(z) = \Delta P_1(z), \quad P_2(z) = \Delta P_1(z) + \Delta P_2(z),
	\end{equation*}
is a maximal chain on $\C^2$, invariant under $A(z)$. Moreover, $A(z)$ has the following decomposition
\begin{equation*}
	A(z) = A_0(z) + A_+(z),
\end{equation*}
where
	\begin{align*}
	A_0(z) &  = \Delta P_1(z)A(z)\Delta P_1(z) + \Delta P_1(z)A(z)\Delta P_1(z) = z \cdot \Delta P_1(z) + 0 \cdot\Delta P_2(z), \\  
	A_+(z) & = P_0(z)A(z)\Delta P_1(z) +  P_1(z)A(z)\Delta P_2(z) = -z \<\cdot, \varphi^{(2)}(z) \>\varphi^{(1)}(z).
\end{align*}
The matrices $A_0(z)$ and $A_+(z)$ are respectively a diagonal of $A(z)$, and a nilpotent matrix.

Next, we construct a maximal chain on $\elpd$ that is invariant under the ope\-rator $A$. Assume that  contour $\lambda_1(\T)$ is oriented according to the increase of the argument $z \in \T$, so $\alpha_1 = 1$. Let $\nu \in \Gamma = \lambda_1(\T) \cup \left\{0\right\}$ and consider the matrix-valued function
\begin{equation*}
	P_{\nu}(z) = \chi_{[1, \nu)}(\lambda_1(z))\Delta P_1(z) + \chi_{[1, \nu)}(\lambda_2(z))\Delta P_2(z), \quad z \in \T.
\end{equation*}
Since 
\begin{equation*}
	P_{\nu}(z) = \chi_{[1, \nu)}(\lambda_1(z))\Delta P_1(z) = \chi_{[1, \nu)}(z)\Delta P_1(z), \quad z \in \T,
\end{equation*}
where $\nu \in \lambda_1(\T) = \T$ and 
\begin{align*}
	P_{\nu}(z) =  I, \quad z \in \T,
\end{align*}
where $\nu \in \lambda_2(\T) = \{0\}$, then determining the Fourier coefficients to the function $P_{\nu}(z)$, $z \in \T$, we get the following convolution operators
\begin{align*}
	\bP_{\nu} & = \frac{1}{4\pi} \left[ \begin{array}{cc} 
		-i(\nu -1) & \frac{-i}{2} (\nu^{2} -1) \\
		\arg \nu & -i(\nu -1)  
	\end{array} \right]\, \bS^{-1} + \frac{1}{4\pi} \left[ \begin{array}{cc} 
		\arg \nu & -i(\nu -1) \\
		i(\nu^{-1} -1) & \arg \nu  
	\end{array} \right] \, \bI \\
	& = \frac{1}{4\pi} \left[ \begin{array}{cc} 
		i(\nu^{-1} -1) &  \arg \nu \\
		\frac{-i}{2} (\nu^{-2} -1) & i(\nu^{-1} -1)  
	\end{array} \right] \, \bS \\
	& \quad + \frac{i}{4\pi} \sum_{n \in \Z \setminus \{-1, 0, 1\}} \left[ \begin{array}{cc} 
		\frac{1}{n}(\nu^{-n} -1) & \frac{1}{n-1} (\nu^{-n+1} -1) \\
		\frac{- 1}{n+1}(\nu^{-n-1} - 1) & \frac{1}{n}(\nu^{-n} -1)  
	\end{array} \right] \, \bS^n, \quad \nu \in \T,
\end{align*}
and
\begin{equation*}
	\bP_{\nu = 0} = \bI.
\end{equation*}
A set of projections
\begin{equation*}
	\bP: \quad O = \bP_{\alpha_1} < \bP_{\nu} < \bP_{\mu} < \bP_{\nu = 0} = \bI, \quad \nu, \mu \in \lambda_1(\T),
\end{equation*}
is a chain on $\elpd$, invariant under the operator $\bA$, and reducing for $\bA_0$, where
\begin{align*}
	\bA_{0} & = \frac{1}{2} \left[ \begin{array}{cc} 
		0 & 0 \\
		1 & 0 
	\end{array} \right]\, \bI + \frac{1}{2} \bS + \frac{1}{2} \left[ \begin{array}{cc} 
		0 &  1 \\
		0 & 0  
	\end{array} \right] \, \bS^2. 
\end{align*}
The operator $\bA$ admits a triangular decomposition
\begin{equation*}
	\bA = \bA_0 + \bA_+,
\end{equation*}	
where $\bA_0$ is the diagonal of $A$ with respect to the chain $P$, i.e.
\begin{equation*}
	\bA_0 = \int_{\bP} (d P) \bA (d P), 
\end{equation*}	
and
\begin{align*}
	\bA_{+} & = \frac{1}{2} \left[ \begin{array}{cc} 
		0 & 0 \\
		1 & 0 
	\end{array} \right]\, \bI + \frac{1}{2} \left[ \begin{array}{cc} 
		1 & 0 \\
		0 & -1 
	\end{array} \right] \, \bS + \frac{1}{2} \left[ \begin{array}{cc} 
		0 &  -1 \\
		0 & 0  
	\end{array} \right] \, \bS^2, \quad \bA_{+}^2 = 0.
\end{align*}

\end{example}

\begin{example}
	
Let	a matrix-valued function
	\begin{equation*}
		A(z) =
		\left[ \begin{array}{ll} 
			0 & z  \\
			z & 0 
		\end{array} \right], \quad z \in \T,
	\end{equation*}	
be a symbol of the convolution operator $A \in \elpd$. It is easy to show that eigenvalues of the matrix $A(z)$ are of the form  $\lambda_1(z) = z$ and $\lambda_2(z) = -z$, so the set $\Gamma = \T \cup -\T$ coincides with a spectrum of the operator $\bA$. By repeating construction given in Subsection \ref{sub_schur} we shall build a chain on $\elpd$ for the operator $A$. Assume that both contours $\lambda_1(\T)$ and $\lambda_2(\T)$ are oriented according to the increase of the argument $z \in \T$, so $\alpha_1 = 1$ and $\alpha_2 = -1$, respectively. Then the set of projections
\begin{equation*}
	\bP: \quad O = \bP_{\alpha_1} < \bP_{\nu} < \bP_{\mu} < \bP_{\beta_2} = \bI, \quad \nu \in \T, \ \mu \in - \T,
\end{equation*}
where 
	\begin{equation*}
	\bP_\nu = \frac{\arg \nu}{4 \pi} \left[ \begin{array}{ll} 
		1 & 1   \\
		1 & 1  
	\end{array} \right] \, \bI + \sum_{n \in \Z \setminus \{0\}} \frac{i}{4 n \pi}(\nu^{-n} - 1)\left[ \begin{array}{ll} 
		1 & 1   \\
		1 & 1  
	\end{array} \right] \, \bS^n, \quad \nu \in \T,
\end{equation*}

\begin{align*}
	\bP_\mu & =  \frac{1}{4 \pi} \left[ \begin{array}{cc} 
		\pi + \arg \mu & 3 \pi + \arg \mu   \\
		3 \pi + \arg \mu & \pi + \arg \mu  
	\end{array} \right] \, \bI \\
& \quad + \sum_{n \in \Z \setminus \{0\}} \frac{i}{4 n \pi}((-1)^n \mu^{-n} - 1)\left[ \begin{array}{cc} 
		1 & -1   \\
		-1 & 1  
	\end{array} \right] \, \bS^n, \quad \mu \in -\T,
\end{align*}
and $\beta_2 = e^{3i\pi}$ is a chain on $\elpd$, reducing for the operator $\bA$. Therefore, the operator $A$ is diagonal with respect to the chain $P$, and
\begin{equation*}
	\bA = \int_{\bP} (d P) \bA (d P).
\end{equation*}

\end{example}

\begin{example} \label{Przyklad2} 
	Let $A$ denote a Laurent operator on the space $\elpd$ corresponding to the symbol
	\begin{equation*}
	A(z) = \left[ \begin{array}{cc}
		2(1 + i) & z \\
		4z^{-1} & 2(1 - i)  \\
	\end{array} \right ], \quad z \in \T.
	\end{equation*}
Relying upon our discussion of Subsection \ref{sub_schur}, we see that
	\begin{equation*}
		T(z) =\left[ \begin{array}{cc}
			2 & 5 \\
			0 & 2  \\
		\end{array} \right ], \quad U(z) = \left[ \begin{array}{cc}
			\frac{iz}{\sqrt{5}} &  \frac{2z}{\sqrt{5}}\\
			\frac{2}{\sqrt{5}}  &\frac{i}{\sqrt{5}}  \\
		\end{array} \right ], \quad z \in \T.
	\end{equation*}
Since the symbol of $A$ has in each point $z \in \T$ only one eigenvalue \linebreak $\lambda_1(z) = \lambda_2(z) = 2$, then $\sigma(A) = \{2\}$. But when constructing a chain for A we assume that the spectrum contains two superimposed points $\lambda_1(\T) = \{2\}$, $\lambda_2(\T) = \{2\}$. An invariant chain under the operator $A$ is the set of projections
\begin{equation*}
	\bP: \quad O  < \bP_{\nu=2} < \bI,
\end{equation*}
on $\elpd$, where 
\begin{align*}
	\bP_{\nu =2} & = \frac{1}{5} \left[ \begin{array}{cc} 
		0 & 0 \\
		-2i & 0 
	\end{array} \right]\, \bS^{-1} + \frac{1}{5} \left[ \begin{array}{cc} 
	1 &  0 \\
	0 & 4  
\end{array} \right] \,I + \frac{1}{5}\left[ \begin{array}{cc} 
		0 &  2i \\
		0 & 0  
	\end{array} \right] \, \bS. 
\end{align*}
The operator $\bA$ admits a triangular decomposition
\begin{equation*}
	\bA = \bA_0 + \bA_+,
\end{equation*}	
where $\bA_0$ is the diagonal of $A$ with respect to the chain $P$, i.e.
\begin{equation*}
	\bA_0 = \int_{\bP} (d P) \bA (d P) = 2I, 
\end{equation*}	
and
\begin{align*}
	\bA_{+} & = \left[ \begin{array}{cc} 
		0 & 0 \\
		4 & 0 
	\end{array} \right]\, \bS^{-1} +  \left[ \begin{array}{cc} 
		2i & 0 \\
		0 & -2i 
	\end{array} \right] \, \bI +  \left[ \begin{array}{cc} 
		0 &  1 \\
		0 & 0  
	\end{array} \right] \, \bS, \quad \bA_{+}^2 = 0.
\end{align*}
\end{example}

\begin{example} \label{Przyklad2} 
	Let $A$ denote a convolution operator on the space $\elpd$ corresponding to the symbol
	\begin{equation*}
		A(z) = \left[ \begin{array}{cc}
			z - \frac{1}{2} & \frac{1}{2}i \\
			iz - \frac{1}{2}i  & 2z - \frac{1}{2}  \\
		\end{array} \right ], \quad z \in \T.
	\end{equation*}
It is easy to show that
	\begin{equation*}
		T(z) =\left[ \begin{array}{cc}
			z & z \\
			0 & 2z-1  \\
		\end{array} \right ], \quad U(z) = \left[ \begin{array}{cc}
			\frac{i}{\sqrt{2}} &  -\frac{i}{\sqrt{2}}\\
			\frac{1}{\sqrt{2}}  & \frac{1}{\sqrt{2}} \\
		\end{array} \right ], \quad z \in \T.
	\end{equation*}
	The symbol of $A$ has in each point $z \in \T$ eigenvalues $\lambda_1(z) = z$ and \linebreak $ \lambda_2(z) = 2z -1$, so the spectrum of $A$ is the set consists points on two contours that have one common point: $\sigma(A) = \T \cup \{\mu \in \C: \mu = 2z-1\}$. Note that the symbol of $A$ is not a diagonalizable matrix in only one point $z = 1$.  
	
	Denote that both contours $\lambda_1(\T)$ and $\lambda_2(\T)$ are oriented according to the increase of the argument $z \in \T$, so $\alpha_1 = 1$ and $\alpha_2 = 1$, respectively. Then the set of projections
	\begin{equation*}
		\bP: \quad O = \bP_{\alpha_1} < \bP_{\nu} < \bP_{\mu} < \bP_{\beta_2} = \bI, \quad \nu \in \T, \ \mu \in \lambda_2(\T),
	\end{equation*}
	where 
	\begin{equation*}
		\bP_\nu = \frac{\arg \nu}{2 \pi} \Delta P_1 \, \bI + \sum_{n \in \Z \setminus \{0\}} \frac{i}{2 n \pi}(\nu^{-n} - 1)\Delta P_1 \, \bS^n, \quad \nu \in \T,
	\end{equation*}
	
	\begin{align*}
		\bP_\mu & = \left(\Delta P_1 + \frac{1}{2 \pi} \arg \left(\frac{\mu + 1}{2}\right) \Delta P_2 \right)\, \bI \\
		& \quad + \sum_{n \in \Z \setminus \{0\}} \frac{i}{2 n \pi}\left(\left(\frac{\mu + 1}{2}\right)^{-n} - 1\right)\Delta P_2 \, \bS^n,  \quad \mu \in \lambda_2(\T),
	\end{align*}
and
	\begin{equation*}
	\Delta P_1 = \frac{1}{2} \left[ \begin{array}{ll} 
		1 & i   \\
		-i & 1  
	\end{array} \right], \quad \Delta P_2 = \frac{1}{2} \left[ \begin{array}{ll} 
		1 & -i   \\
		i & 1  
	\end{array} \right], \quad \beta_2 = e^{2i\pi},
\end{equation*}
	is a chain on $\elpd$, invariant under for the operator $\bA$. The operator $\bA$ admits a triangular decomposition
	\begin{equation*}
		\bA = \bA_0 + \bA_+,
	\end{equation*}	
	where $\bA_0$ is the diagonal of $A$ with respect to the chain $P$, i.e.
	\begin{equation*}
		\bA_0 = \int_{\bP} (d P) \bA (d P) = -\Delta P_2 I + (\Delta P_1 + 2 \Delta P_2)S, 
	\end{equation*}	
	and
	\begin{align*}
		\bA_{+} & = \frac{1}{2} \left[ \begin{array}{cc} 
			-1 &  i \\
			i & 1  
		\end{array} \right] \, \bS, \quad \bA_{+}^2 = 0,
	\end{align*}
but there does not exist a spectral measure in the sense of Dunford for the operator $A$.
\end{example}	

\noindent\textbf{Acknowledgements}\\
\textit{The author wishes to express her gratitude to Professor P.A. Cojuhari for formulating the problems and for many useful discussions.}

\bigskip


\noindent Ewelina Zalot\\  
zalot@agh.edu.pl\\
\url{https://orcid.org/0000-0002-1050-6756} \bigskip

\noindent {\small
	\noindent AGH University of Science and Technology\\
	Faculty of Applied Mathematics\\
	al. A. Mickiewicza 30, 30-059 Krak{\'o}w, Poland
}\bigskip

\end{document}